\newcommand{\ba}{\mathbf{a}} \newcommand{\bx}{\mathbf{x}} 
\newcommand{\bm}{\mathbf{m}}\newcommand{\bn}{\mathbf{n}}\newcommand{\bE}{\mathbf{E}}
\newcommand{\cF}{\mathcal{F}}
\newcommand{\cA}{\mathcal{A}} \newcommand{\cB}{\mathcal{B}}
\newcommand{\cS}{\mathcal{S}} 
\newcommand{\vac}{|0\rangle}
\newtheorem{thm}{Theorem}[section]
\newtheorem{lem}[thm]{Lemma}
\newtheorem{prop}[thm]{Proposition}
\theoremstyle{definition}
\theoremstyle{remark}
\newtheorem{rem}{Remark}[section]
\newcommand{\half}{\frac{1}{2}}
\newcommand{\be}{\begin{equation}}
\newcommand{\ee}{\end{equation}}
\newcommand{\bea}{\begin{eqnarray}}
\newcommand{\eea}{\end{eqnarray}}
\newcommand{\ben}{\begin{eqnarray*}}
\newcommand{\een}{\end{eqnarray*}}
\newcommand{\bt}{\begin{split}}
\newcommand{\et}{\end{split}}
\newcommand{\bet}{\begin{equation}
\begin{split}}
\newcommand{\eet}{\end{split}
\end{equation}}
\DeclareMathOperator{\Span}{span}
\begin{document}

\title{Fermionic gluing principle of the topological vertex}
\date{}
\author{Fusheng Deng \and Jian Zhou}
\address{Fusheng Deng: \ School of Mathematical Sciences, Graduate University of Chinese Academy of Sciences\\ Beijing 100049, China}
\email{fshdeng@gucas.ac.cn}
\address{Jian Zhou: Department of Mathematical Sciences Tsinghua University\\Beijing, 100084, China}
\email{jzhou@math.tsinghua.edu.cn}

\begin{abstract}
We will establish the fermionic gluing principle of the
topological vertex, that is, provided the framed ADKMV conjecture,
the generating functions of the Gromov-Witten invariants of all toric
Calabi-Yau threefolds are Bogoliubov transforms of the vacuum.
\end{abstract}

\maketitle

\section{Introduction}

In general it is an unsolved problem to compute the Gromov-Witten invariants of an algebraic variety
in arbitrary genera.
However,
in the case of toric Calabi-Yau threefolds (which are noncompact),
string theorists have found an algorithm called the topological vertex \cite{AKMV}
to compute the generating function of both open and closed
Gromov-Witten invariants based on a remarkable duality with
 link invariants in the Chern-Simons theory approach of Witten \cite{Wit1, Wit2}.
A mathematical theory of the topological vertex  has been developed in  \cite{LLLZ}.

The topological vertex, which is the generating function of the Gromov-Witten invariants of $\mathbb{C}^3$ with three
special $D$-branes,
is a mysterious combinatorial object that asks for further studies.
On the $A$-theory side, the topological vertex can be
realized as a state in the threefold tensor product of the space $\Lambda$ of symmetric functions.
In this representation its expressions given
by physicists \cite{AKMV} or by mathematicians \cite{LLLZ} are both very complicated.
It is very interesting to understand the topological vertex
from other perspectives.
In \cite{ORV},
the topological vertex is related to a combinatorial problem
of plane partitions.
In \cite{AKMV} it was suggested that the topological vertex is a Bogoliubov
transform (of the vacuum) via the boson-fermion correspondence.
This point of view was further elaborated in \cite{ADKMV} and extended
to the partition functions of toric Calabi-Yau threefolds.
Indeed, by the local mirror symmetry \cite{HV, HIV},
on the B-model side,
one studies quantum Kodaira-Spencer theory of the local mirror curve.
By physical derivations,
the corresponding state is constrained by the Ward identities,
giving the $W_\infty$ constraints.
(See also \cite{Gukov-Sulkowski} where the partition functions
are expected to be annihilated by certain quantum operators
obtaining by quantizing the local mirror curves.)
In this formalism it is natural to use the fermonic picture,
and a simple looking formula (see \S \ref{subsec:framed ADKMV}) for the fermionic form of the topological vertex
under the boson-fermion correspondence was conjectured
in \cite{ADKMV}, which was referred to as the ADKMV conjecture in \cite{DZ}.
The ADKMV conjecture is directly related to integrable hierarchies:
The one-legged case is related to the KP hierarchy, the two-legged case
to the 2-dimensional Toda hierarchy, and the three-legged case to the 3-component KP hierarchy (see Remark \ref{rem:Bogoliubov-KP}).
The one-legged and the two-legged cases can also be seen directly from the bosonic picture \cite{Zh3},
but the three-legged case can only be seen through the fermionic picture.

The topological vertex can be used to compute Gromov-Witten invariants
of toric Calabi-Yau $3$-folds by certain gluing rules.
There is a standard inner product on the space $\Lambda$ of symmetric functions by setting the set of Schur functions
as an orthonormal basis,
and the gluing rule is essentially taking inner product over the components corresponding to
the branes of gluing (see \S \ref{subsec:topological vertex} for exact formulation).
So the resulted generating functions are states in multifold tensor products of the space $\Lambda$.
In general, they
have very complicated combinatorial structures.

In our recent work \cite{DZ}, we proposed a generalization of the  ADKMV conjecture to the framed topological vertex which we refer to as
the framed ADKMV conjecture.
Note that it is important to consider framing
when we consider gluing of the topological vertex. We gave a proof in \cite{DZ} of the framed ADKMV conjecture in the one-legged case and the two-legged case, and derived
a determinantal formula for the framed topological vertex in the three-legged case based on the Framed ADKMV Conjecture.
It remains open to give a proof of this conjecture for the full three-legged topological vertex.

Provided that the framed ADKMV conjecture holds,
then a  natural question is whether or not the generating functions of the Gromov-Witten invariants of general toric Calabi-Yau threefolds are  Bogoliubov transforms
in the fermionic picture.
It was also conjectured in \cite{ADKMV} that it is indeed the case.
However, it seems very difficult to prove this conjecture directly by boson-fermion correspondence and standard Schur calculus,
even for the very simple case of the resolved conifold with a single brane.
In \cite{Sulkowski}
the closed string partition function of the resolved conifold
is related to Hall-Littlewood functions
and a fermionic represenation is obtained by
the deformed boson-fermion correspondence.
Based on the method in \cite{ADKMV},
it was shown in \cite{Kashani-Poor} that the B-model amplitude  of the mirror space
of the one-legged resolved conifold is a Bogoliubov transform,
where how to use the ADKMV conjecture and the gluing rule of the topological
vertex to show this result was also mentioned as an open problem.
It also seems difficult to generalize the method in \cite{ADKMV}
and \cite{Kashani-Poor} to prove this conjecture in general.

In this paper we will tackle this problem using a different strategy.
We will  start from the framed ADKMV conjecture,
and then consider the gluing rule of the topological vertex
as presented in \cite{AKMV}\cite{LLLZ} in the fermionic picture.
Our main aim of this article is to prove
that, provided the framed ADKMV conjecture,
the fermionic form of the generating function of the Gromov-Witten
invariants of any toric Calabi-Yau threefold is a Bogoliubov transform
(see Theorem \ref{thm:ferm. glu. of tv} for exact formulation).
In particular, it is a tau function of multi-component KP hierarchies.
We refer to this result as the fermionic gluing principle of the topological vertex.

By the framed ADKMV conjecture for the framed one-legged and two-legged  topological vertex proved in \cite{DZ},
we  get that the generating functions of the Gromov-Witten invariants of the
total spaces of the bundles $\mathcal{O}(p)\oplus\mathcal{O}(-p-2)\rightarrow \mathbb{P}^1$
with two outer branes on different vertices are  two-component Bogoliubov transforms;
in particular, they are tau functions of the Toda hierarchy.

In fact, we establish the gluing principle of the topological vertex by
proving a gluing principle for general Bogoliubov transforms,
namely, the self-gluing (see \S \ref{sec:self-gluing} for definition) of a Bogoliubov transform
or the gluing (see \S \ref{subsec:gluing} for definition) of two Bogoliubov transforms
is still a Bogoliubov transform.
It may be interesting to generalize our method to prove similar result for general tau functions
of muti-component KP hierarchies which are not necessarily Bogoliubov transforms.

The rest of the paper is arranged as follows. After reviewing some preliminaries in \S 2,
we define the self-gluing of a Bogoliubov transform
and state the self-gluing principle  in \S 3,
and define the gluing of two Bogoliubov transforms
and give the gluing principle  in \S 4.
In \S 5, we apply the results in \S 3 and \S 4 to establish the fermionic
gluing principle of the topological vertex.
In the final \S 6, we give a proof of the self-gluing principle
for Bogoliubov transforms (Theorem \ref{thm:self-gluing rule}).

\vspace{.1in}
{\em Acknowledgements}.
The first author is partially supported by NSFC grants
(11001148 and 10901152) and the President Fund of GUCAS. The second author is partially supported by two NSFC grants (10425101 and 10631050)
and a 973 project grant NKBRPC (2006cB805905).

\section{Preliminaries}\label{sec:prelimilaries}

In this section, we recall briefly some well-known concepts and results that will be used later.

\subsection{Partitions and symmetric functions}\label{subsec:Schur & skew schur}
A partition $\mu$ of a positive integral number $n$ is a decreasing finite sequence of integers $\mu_1\geq\cdots \geq\mu_l>0$,
such that $|\mu| = \mu_1 + \cdots + \mu_l = n$.
The following number associated to $\mu$ will be useful in this paper:
\be
 \kappa_\mu = \sum_{i=1}^l \mu_i(\mu_i - 2i + 1).
\ee
It is very useful to graphically represent a partition by its Young diagram.
This leads to many natural definitions.
First of all,
by transposing the Young diagram one can define the conjugate $\mu^t$ of $\mu$.
Secondly
assume the Young diagram of $\mu$ has $k$ boxes in the diagonal.
Define $m_i = \mu_i - i$ and $n_i = \mu^t_i - i$ for $i = 1, \cdots , k$,
then it is clear that $m_1> \cdots > m_k \geq 0$ and  $n_1> \cdots > n_k \geq 0$.
The partition $\mu$ is completely determined by the numbers $m_i , n_i$.
We often denote the partition $\mu$ by $(m_1, \dots , m_k | n_1, \dots , n_k)$,
this is called the Frobenius notation.
A partition of the form $(m|n)$ in Frobenius form is called a hook partition.

Roughly speaking, a symmetric function is a symmetric polynomial of
infinitely many variables (see \cite{Macdonald} for details).
We denote by $\Lambda$  the space of all symmetric functions in variables $\bx = (x_1, x_2, \dots)$.
For each partition $\mu$, there is an attached  symmetric function $s_\mu$
which is called a Schur function. The Schur function corresponding to the
empty partition is 1.
The inner product on the space $\Lambda$ is defined by setting the set of Schur functions as an orthonormal basis.

Given two partitions $\mu$ and $\nu$,
the skew Schur function $s_{\mu/\nu}$ is defined by the condition
$$(s_{\mu/\nu} , s_\lambda) = (s_\mu , s_\nu s_\lambda)$$
for all partitions $\lambda$. This is equivalent to define
$$s_{\mu/\nu} = \sum_{\lambda}c_{\nu\lambda}^\mu s_\lambda,$$
where the constants $c_{\nu\lambda}^\mu$ are the structure constants
(called the Littlewood-Richardson coefficients) defined by
\be
s_\nu s_\lambda = \sum_{\gamma}c_{\nu\lambda}^\gamma s_\gamma.
\ee

\subsection{Fermionic Fock space }
We say a set of integers $A = \{a_1, a_2, \dots \}\subset \mathbb{Z}+\frac{1}{2}$, $a_1>a_2> \cdots$, is admissible if it satisfies the following two conditions:
\begin{itemize}
\item[1.] $\mathbb{Z}_- + \frac{1}{2}\backslash A$ is finite and
\item[2.] $A\backslash \mathbb{Z}_- + \frac{1}{2}$ is finite,
\end{itemize}
where  $\mathbb{Z}_-$ is the set of negative integers.

Let $W$ be the linear space that is spanned by the basis
$\{\underline{a}| a\in \mathbb{Z}+\frac{1}{2}\}$ indexed by half-integers.
For an admissible set $A = \{a_1, a_2, \dots\}$,
we associate an element $\underline{A}\in \wedge^\infty W$ as follows:
$$\underline{A} = \underline{a_1}\wedge \underline{a_2} \wedge \cdots.$$
Then the free fermionic Fock space $\mathcal{F}$ is defined as
$$\cF = \Span \{\underline{A}: \; A\subset \mathbb{Z}+\frac{1}{2}\; \text{is admissible} \}.$$
We define an inner product on $\mathcal{F}$ by taking
$\{\underline{A}:\; A\subset \mathbb{Z}+\frac{1}{2}\; \text{is admissible} \}$ as an orthonormal basis.

For $\underline{A} = \underline{a_1}\wedge \underline{a_2} \wedge \cdots
\in \mathcal{F}$,
define its charge as:
$$|A\backslash \mathbb{Z}_- + \frac{1}{2}| - |\mathbb{Z}_- + \frac{1}{2}\backslash A|.$$
Denote by $\cF^{(n)} \subset \mathcal{F}$ the subspace spanned by $\underline{A}$ of charge $n$,
then there is a decomposition
$$\mathcal{F} = \bigoplus_{n\in \mathbb{Z}} \cF^{(n)}.$$
An operator on $\mathcal{F}$ is called charge 0 if it preserves the above decomposition.

The charge 0 subspace $\cF^{(0)}$ has a basis indexed by partitions:
\be
|\mu\rangle:= \underline{\mu_1 - \frac{1}{2}} \wedge \underline{\mu_2 - \frac{3}{2}}\wedge \cdots \wedge
 \underline{\mu_l -\frac{2l-1}{2}}\wedge \underline{-\frac{2l+1}{2}}\wedge \cdots
\ee
where $\mu = (\mu_1, \cdots , \mu_l)$,
i.e.,
$|\mu\rangle = \underline{A_\mu}$, where $A_\mu =(\mu_i - i + \half)_{i=1, 2, \dots}$.
If $\mu = (m_1, \cdots , m_k | n_1, \cdots , n_k)$ in Frobenius notation, then
\begin{equation}
|\mu\rangle = \underline{m_1+\frac{1}{2}}\wedge \cdots \wedge \underline{m_k+\frac{1}{2}}\wedge \underline{-\frac{1}{2}}
\wedge \underline{-\frac{3}{2}} \wedge \cdots \wedge \widehat{\underline{-n_k-\frac{1}{2}}} \wedge \cdots \wedge
\widehat{\underline{-n_1-\frac{1}{2}}} \wedge \cdots .
\end{equation}
In particular,
when $\mu$ is the empty partition,
we get:
$$|0\rangle := \underline{-\frac{1}{2}}\wedge \underline{-\frac{3}{2}}\wedge \cdots \in \mathcal{F}.$$
It will be called the fermionic vacuum vector.

We now recall the creators and annihilators on $\mathcal{F}$.
For $r \in \mathbb{Z}+\frac{1}{2}$,
define operators $\psi_r$ and $\psi^*_r$ by
\begin{eqnarray*}
&\psi_r (\underline{A}) =
\begin{cases}
(-1)^{k}\underline{a_1}\wedge\cdots\wedge\underline{a_k}\wedge \underline{r}\wedge\underline{a_{k+1}}\wedge\cdots, & \text{if $a_k > r > a_{k+1}$ for some $k$}, \\
0, &  \text{otherwise};
\end{cases}\\
&\psi^*_r(\underline{A}) =
\begin{cases}
(-1)^{k+1}\underline{a_1}\wedge\cdots\wedge \widehat{\underline{a_k}}\wedge\cdots, & \text{if $a_k = r$ for some $k$}, \\
0, &  \text{otherwise}.
\end{cases}
\end{eqnarray*}
Under the inner product defined above, for $r \in \mathbb{Z}+1/2$, it is clear that $\psi_r $ and $\psi^*_r$ are adjoint operators.
The anti-commutation relations for these operators are
\begin{equation} \label{eqn:CR}
[\psi_r,\psi^*_s]_+:= \psi_r\psi^*_s + \psi^*_s\psi_r = \delta_{r,s}id
\end{equation}
and other anti-commutation relations are zero.
It is clear that for $r > 0$,
\begin{align}
\psi_{-r} \vac & = 0, & \psi_r^* \vac & = 0,
\end{align}
so the operators $\{\psi_{-r}, \psi_r^*\}_{r > 0}$ are called the fermionic annihilators.
For a partition $\mu = (m_1, m_2, . . ., m_k | n_1, n_2, . . ., n_k)$, it is clear that
\be\label{eq:operator rep for mu}
|\mu\rangle = (-1)^{n_1 + n_2 + . . . + n_k}\prod_{i=1}^k \psi_{m_i+\frac{1}{2}} \psi_{-n_i-\frac{1}{2}}^*|0\rangle .
\ee
So the operators $\{\psi_{r}, \psi_{-r}^*\}_{r > 0}$ are called the fermionic creators.
The normally ordered product is defined as
\begin{equation*}
:\psi_r\psi^*_r: =
\begin{cases}
 \psi_r\psi^*_r, & r>0, \\
- \psi^*_r\psi_r, & r<0.
\end{cases}
\end{equation*}
In other words,
an annihilator is always put on the right of a creator.

\subsection{Boson-fermion correspondence}
For any integer $n$, define an operator $\alpha_n$ on the fermionic Fock space $\mathcal{F}$ as follows:
\begin{equation*}
\alpha_n = \sum_{r\in \mathbb{Z} + \frac{1}{2}}:\psi_r\psi^*_{r+n}:.
\end{equation*}
Let
$\mathcal{B} = \Lambda[z , z^{-1}]$
be the bosonic Fock space, where $z$ is a formal variable.
Then the  boson-fermion correspondence is a linear isomorphism
$\Phi: \mathcal{F} \rightarrow \mathcal{B}$ given by
\begin{equation}
u\mapsto z^m \langle\underline{0}_m | e^{\sum_{n=1}^\infty \frac{p_n}{n}\alpha_n}u\rangle ,\ \ u\in \cF^{(m)}
\end{equation}
where $p_n=\sum_{i\geq 1}x^n_i$ are the Newton polynomials and
$|\underline{0}_m\rangle = \underline{-\frac{1}{2}+m}\wedge \underline{-\frac{3}{2}+m}\wedge\cdots$.
It is clear that $\Phi$ induces an isomorphism between $\cF^{(0)}$ and $\Lambda$. Explicitly, this isomorphism is given by
\begin{equation}\label{boson-fermion}
|\mu\rangle \longleftrightarrow s_\mu.
\end{equation}

The boson-fermionic correspondence plays an important role in Kyoto school's theory
of integrable hierarchies.
For example,

\begin{prop}\label{bilinear relation tau fermion}
If $\tau\in \Lambda$ corresponds to $|v\rangle\in F^{(0)}$, then $\tau$ is a $tau$-function of the KP
hierarchy in the Miwa variable $t_n = \frac{p_n}{n}$ if and only if $|v\rangle$ satisfies the bilinear relation
\begin{equation}\label{bilinear relation tau fermion 1}
\sum_{r\in \mathbb{Z} + \frac{1}{2}}\psi_r |v\rangle\otimes \psi^*_r |v\rangle = 0.
\end{equation}
\end{prop}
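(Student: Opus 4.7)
The plan is to translate the fermionic bilinear identity into the Hirota bilinear form of the KP hierarchy via the boson-fermion correspondence. First I would package the creators and annihilators into generating series (the fermion fields)
$$\psi(z) = \sum_{r \in \mathbb{Z}+\frac{1}{2}} \psi_r\, z^{-r-\frac{1}{2}}, \qquad \psi^*(z) = \sum_{r \in \mathbb{Z}+\frac{1}{2}} \psi^*_r\, z^{-r-\frac{1}{2}},$$
so that the identity \eqref{bilinear relation tau fermion 1} is equivalent to the contour-residue statement that the coefficient of $z^{-1}$ in $\psi(z)|v\rangle \otimes \psi^*(z)|v\rangle$ vanishes. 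This rewriting converts a discrete infinite sum into a more tractable formal object on which the boson-fermion correspondence can be applied term by term.

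Next, I would derive vertex operator expressions for $\psi(z)$ and $\psi^*(z)$ under $\Phi$. Using the definition of $\Phi$ via $\exp\bigl(\sum_{n\geq 1}\frac{p_n}{n}\alpha_n\bigr)$ together with the commutation relations $[\alpha_n,\psi(z)] = z^n\,\psi(z)$ and $[\alpha_n,\psi^*(z)] = -z^n\,\psi^*(z)$, one obtains the classical formulas expressing $\psi(z)$ and $\psi^*(z)$ on each charge sector as
$$\Phi\,\psi(z)\,\Phi^{-1} = e^{\sum_{n\geq 1}\frac{p_n}{n}z^n}\,e^{-\sum_{n\geq 1}\frac{z^{-n}}{n}\partial_{p_n}}\cdot z^{L_0}Q,$$
and the analogous expression for $\psi^*(z)$ with opposite signs and charge shift, where $Q$ raises charge by $1$. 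The only subtlety is that $\psi(z)|v\rangle$ lies in $\cF^{(1)}$ while $\psi^*(z)|v\rangle$ lies in $\cF^{(-1)}$, so the bosonic images must be compared after compensating the charge-shift factor $z^{L_0}$ in the tensor product; this produces the relative exponential $e^{\xi(t-t',z)}$ below.

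Applying $\Phi \otimes \Phi$ to the fermionic bilinear identity then yields the Hirota bilinear identity
$$\oint \frac{dz}{2\pi i}\,\tau\bigl(t-[z^{-1}]\bigr)\,\tau\bigl(t'+[z^{-1}]\bigr)\, e^{\xi(t-t',z)} = 0,$$
where $[z^{-1}] = (z^{-1},\tfrac{1}{2}z^{-2},\tfrac{1}{3}z^{-3},\dots)$ and $\xi(t,z)=\sum_{n\geq 1}t_n z^n$, which is the standard characterization of KP tau functions in the Miwa variables $t_n = p_n/n$. Expanding the contour integral in powers of $t-t'$ produces the full family of Hirota bilinear equations. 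Since $\Phi\otimes\Phi$ is a linear isomorphism, both implications follow at once.

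The main obstacle I anticipate is the careful bookkeeping in the second step: producing the vertex operator formulas with the correct normal-ordering constants and the correct interaction between the charge-shift operator $z^{L_0}Q$ and the exponential factors, so that after taking the tensor product and contracting, one lands on precisely the Hirota kernel $e^{\xi(t-t',z)}$ rather than some shifted or rescaled variant. Once those formulas are nailed down, the remaining manipulations are essentially formal.
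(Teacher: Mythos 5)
The paper does not actually prove this proposition: it is quoted as a standard fact from the Kyoto school's theory of integrable hierarchies (cf.\ the surrounding text and Remark \ref{rem:Bogoliubov-KP}), so there is no in-paper argument to compare against. Your outline is the classical Date--Jimbo--Kashiwara--Miwa proof via vertex operators and the Hirota bilinear identity, and it is the right approach; the overall logic (fermionic bilinear identity $\Leftrightarrow$ Hirota identity under $\Phi\otimes\Phi$, with the Hirota identity taken as the characterization of KP tau functions) is sound. One concrete slip to fix: you expand both generating series with the same power $z^{-r-1/2}$, so the coefficient of $z^{-1}$ in $\psi(z)|v\rangle\otimes\psi^*(z)|v\rangle$ is $\sum_{r}\psi_r|v\rangle\otimes\psi^*_{-r}|v\rangle$, not the required $\sum_{r}\psi_r|v\rangle\otimes\psi^*_{r}|v\rangle$; the two fields must be expanded in opposite powers of $z$ (equivalently, substitute $z\mapsto z^{-1}$ in one factor before taking the residue). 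Relatedly, with the normalization $t_n=p_n/n$ the annihilation half of the vertex operator should read $\exp\bigl(-\sum_{n\geq 1}\tfrac{z^{-n}}{n}\partial_{t_n}\bigr)=\exp\bigl(-\sum_{n\geq 1}z^{-n}\partial_{p_n}\bigr)$, not $\exp\bigl(-\sum_{n\geq 1}\tfrac{z^{-n}}{n}\partial_{p_n}\bigr)$. These are exactly the bookkeeping issues you flag yourself, and they are fixable; once the conventions are aligned the charge-shift factors in $\cF^{(1)}\otimes\cF^{(-1)}$ combine to give the kernel $e^{\xi(t-t',z)}$ as you describe.
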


\begin{rem}\label{rem:Bogoliubov-KP}
A state $|v\rangle\in \cF^{(0)}$ satisfies the bilinear relation
\eqref{bilinear relation tau fermion 1} if and only if it lies in the orbit $\widehat{GL_\infty}|0\rangle$.
This is equivalent to say that $|v\rangle$ can be represented as
$$|v\rangle = \exp(\sum_{r,s\in \mathbb{Z}+1/2}M_{rs}:\psi_{r}\psi_{s}^*:)|0\rangle$$
for some coefficients $M_{rs}$.
There is also a multi-component generalization of the boson-fermion correspondence
which can be used to study multi-component KP hierarchies \cite{KL}.
\end{rem}

\section{Self-gluing principle for Bogoliubov transforms}\label{sec:self-gluing}
In this section, we introduce the notion of Bogoliubov transforms and their self-gluing, and give a statement of the self-gluing principle
of Bogoliubov transforms.

\subsection{Bogoliubov transforms}

On the $N$-component femionic Fock space $\cF_1\otimes\cdots\otimes \cF_N$, where $\cF_1, \ \cdots ,\ \cF_N $ are $N$-copies of $\cF$,
define operators $\psi^i_r$ and $\psi^{i*}_r$, for $r \in {\mathbb Z} + \half$ and $i=1, \cdots , N$.
They act on the $i$-th factor of the tensor product as the operators $\psi_r$ and$\psi_r^*$ respectively,
and we use the Koszul sign convention for the anti-commutation relations for these operators, i.e., we set
\be\label{eqn:sign convention}
[\psi^i_r , \psi^j_s]_+=[\psi^i_r ,\psi^{j*}_s]_+=[\psi^{i*}_r , \psi^{j*}_s]_+ =0
\ee
for $i\neq j$ and $r , s \in {\mathbb Z} + \half$.

For an integer $n$, we denote by $\bn$ the number $n+1/2$.

We call a vector $V\in\cF_1\otimes\cdots\otimes \cF_N$ a Bogoliubov transform (of the vacuum) if it is gotten from the vacuum in $\cF_1\otimes\cdots\otimes \cF_N$  acted upon by an exponential of a quadratic expression of fermionic creators. In other
word, it can be represented as
\be
V = \exp(\sum_{i,j = 1}^N\sum_{m , n \geq 0}A^{ij}_{mn}\psi^i_\bm\psi^{j*}_{-\bn})|0\rangle,
\ee
where $A^{ij}_{mn}$ are certain coefficients possibly with parameters.
Here and in the following,
if not specified otherwise,
for simplicity of notations
we will use $|0\rangle$ to denote
the vacuum $|0\rangle_1\otimes\cdots\otimes|0\rangle_N$ in $\cF_1\otimes\cdots\otimes \cF_N$ and
similar tensor products,
the exact meaning will be clear from the context.

One can see directly form the definition that Bogoliubov transforms are tau functions of multi-component KP hierarchies constructed in
\cite{KL}.

\subsection{The gluing vectors}
Let $a$ and $b$ be two indices, $\cF_a$ and $\cF_b$  two copies of the fermionic Fock space $\cF$.
We call a vector $P^\bE_{ab}\in\cF_a\otimes\cF_b$ of the form
\be\label{eqn:the gluing operator}
P^\bE_{ab} = \exp\left(\sum_{i,j=a,b}\sum_{m,n\geq 0} Q^{m+n+1}
\Theta^{\epsilon^{ab}_{ij}}E^{ij}_{mn}\psi^i_\bm\psi^{j*}_{-\bn}\right)|0\rangle
\ee
a \emph{gluing vector},
where $Q$ , $\Theta$ are formal variables and $\bE = \{E^{ij}_{mn}| m,n \geq 0;\ i,j = a,b\}$ is a series of coefficients maybe with parameters,
and $\epsilon^{ab}_{ij}$ is given by
\begin{eqnarray*}
\epsilon^{ab}_{ij} =
\begin{cases}
1, & \text{if $i = a,\ j = b$}, \\
-1, &  \text{if $i = b,\ j = a$},\\
0, &  \text{if $i = j = a$ or $b$}.
\end{cases}
\end{eqnarray*}
Here $P^\bE_{ab}$ is viewed as a formal power series of $Q$ ,
$\Theta$ and $\Theta^{-1}$ with coefficients in $\cF_a\otimes\cF_b$;
it can also be viewed as a vector in the two-component fermionic Fock space $\cF_a\otimes\cF_b$ with certain parameters.

When $P_{ab}^\bE$ is viewed as a Laurent series in $\Theta$,
write
$$P_{ab}^{\bE} = \sum_{n \in {\mathbb Z} } P_n \Theta^n.$$
Then it is easy to see that
$$P_n \in \cF_a^{(n)} \otimes \cF_b^{(-n)}.$$
In particular,
$$P_0 \in \cF_a^{(0)} \otimes \cF_b^{(0)}.$$

\subsection{Self-gluing of Bogoliubov transforms}\label{subsec:self-gluing definition}
Let $V$ be a Bogoliubov transform in the $(M+2)$-component fermionic Fock space
$\cF_1\otimes\cdots\otimes\cF_M\otimes\cF_a\otimes\cF_b$.
We write
$$V = \exp\left(\sum_{i,j\in\{a, b, 1, 2, \cdots , M\}}\sum_{m , n \geq 0}A^{ij}_{mn}\psi^i_\bm\psi^{j*}_{-\bn}\right)|0\rangle,$$
where $A^{ij}_{mn}$ are certain coefficients maybe with parameters.
There is a natural inner product on $\cF_a\otimes\cF_b$ which is induced form that on $\cF$.
The \emph{self-gluing} $\tilde{G}^\bE(V)$ of $V$ with the gluing vector $P^\bE_{ab}$ is defined to be the inner product
\be\label{eqn:self-gluing}
\tilde{G}^\bE(V) = ( V , P^\bE_{ab})
\in \cF_1\otimes\cdots\otimes\cF_M,
\ee
where the inner product is taken over the components $\cF_a\otimes\cF_b$.
We view $\tilde{G}^\bE(V)$ as a formal power series of $Q$ , $\Theta$ and $\Theta^{-1}$
with coefficients in $\cF_1\otimes\cdots\otimes\cF_M$. The closed part $\tilde{G}^\bE(V)_{closed}$
of the self-gluing is defined to be the inner product
\be
\tilde{G}^\bE(V)_{closed}
= (\exp\left(\sum_{i,j = a, b}\sum_{m , n \geq 0}A^{ij}_{mn}\psi^i_\bm\psi^{j*}_{-\bn}\right)|0\rangle_{ab},
P^\bE_{ab}).
\ee
In the above expressions, $|0\rangle_{ab}$ denotes the vacuum in $\cF_a\otimes\cF_b$.
The quotient
\be
G^\bE(V) : = \tilde{G}^\bE(V)/\tilde{G}^\bE(V)_{closed}
\ee
is called the \emph{normalized self-gluing} $G^\bE(V)$ of $V$ with the gluing vector $P^\bE_{ab}$.
We view $G^\bE(V)$ as a vector in $\cF_1\otimes\cdots\otimes\cF_M$ with parameters,
as well as a formal power series of $Q$ , $\Theta$ and $\Theta^{-1}$ with coefficients in
$\cF_1\otimes\cdots\otimes\cF_M$.

\subsection{The self-gluing principle for Bogoliubov transforms}
One of the main aims of this paper is to prove that the normalized self-gluing of an
arbitrary Bogoliubov transform is again a Bogoliubov transform.
In particular, it is a tau function of  multi-component KP hierarchies.
We refer to this result as the the self-gluing principle for Bogoliubov transforms.

\begin{thm} \label{thm:self-gluing rule}
Let $V$ be a Bogoliubov transform in the $(M+2)$-component fermionic Fock space
$\cF_1\otimes \cdots \otimes \cF_{M}\otimes\cF_{a}\otimes\cF_{b}$, $M>0$.
Then the normalized self-gluing $G^\bE(V)$ of $V$ defined as in \S \ref{subsec:self-gluing definition}
is again a Bogoliubov transform in $\cF_1\otimes\cdots\otimes \cF_{M}$.
\end{thm}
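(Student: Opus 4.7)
The plan is to integrate out the $\cF_a\otimes\cF_b$ fermions in the definition of $\tilde{G}^\bE(V)$ and recognize the normalized result as an exponential of a new quadratic in the surviving creators. First I would split the quadratic exponent of $V$ into three parts $X = X_{ab} + X_{main} + X_{mix}$, according to whether the index pair $(i,j)$ lies in $\{a,b\}^2$, in $\{1,\ldots,M\}^2$, or is mixed. Each piece is an even element of the free fermion algebra, so the three pieces pairwise commute and
$$V = \exp(X_{main})\exp(X_{ab})\exp(X_{mix})|0\rangle.$$
Because $\exp(X_{main})$ only involves operators on $\cF_1\otimes\cdots\otimes\cF_M$, it commutes past the $\cF_a\otimes\cF_b$-inner product with $P^\bE_{ab}$, giving
$$\tilde{G}^\bE(V) = \exp(X_{main}) \cdot Z, \qquad Z := \bigl(\exp(X_{ab})\exp(X_{mix})|0\rangle,\; P^\bE_{ab}\bigr).$$
The main task then reduces to showing that $Z$ factors as $\tilde{G}^\bE(V)_{closed}\cdot\exp(X_{new})|0\rangle$ for some quadratic $X_{new}$ in the creators of the main sector.

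To compute $Z$, I would expand $\exp(X_{mix})$ as a formal power series and regroup each monomial as a product of $ab$-creators tensor a product of main-creators, using the Koszul signs \eqref{eqn:sign convention}. For a fixed configuration $K$ of mixed-pair insertions, the $ab$-factor multiplies $\exp(X_{ab})|0\rangle$ and one takes the $\cF_a\otimes\cF_b$ inner product with $P^\bE_{ab}$; this is an overlap of two fermionic Gaussians with finitely many extra creator insertions, so standard Wick-type identities (equivalently, the determinantal formulas for overlaps of Bogoliubov-transformed vacua) express it as $\tilde{G}^\bE(V)_{closed}$ times a finite minor in the entries of $(A^{ij}_{mn})_{i,j\in\{a,b\}}$ and $(Q^{m+n+1}\Theta^{\epsilon^{ab}_{ij}}E^{ij}_{mn})$. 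The main-creator factors of configuration $K$ are untouched, and summing their products against these minors assembles an explicit vector in $\cF_1\otimes\cdots\otimes\cF_M$.

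The crucial step is to recognize that this sum over $K$ reorganizes into $\tilde{G}^\bE(V)_{closed}\cdot\exp(X_{new})|0\rangle$ for an explicit
$$X_{new} = \sum_{\alpha,\beta=1}^{M}\sum_{m,n\geq 0}\tilde{A}^{\alpha\beta}_{mn}\,\psi^\alpha_\bm\,\psi^{\beta*}_{-\bn},$$
with $\tilde{A}^{\alpha\beta}_{mn}$ an explicit finite expression in the entries of the original $A^{ij}_{mn}$ and $E^{ij}_{mn}$; morally, this is the fermionic analog of \emph{completing the square}, or equivalently Wick's theorem exponentiating connected contractions. Once $Z/\tilde{G}^\bE(V)_{closed}=\exp(X_{new})|0\rangle$ is established, multiplying back by $\exp(X_{main})$ (which commutes with $\exp(X_{new})$) yields $G^\bE(V)=\exp(X_{main}+X_{new})|0\rangle$, visibly a Bogoliubov transform on $\cF_1\otimes\cdots\otimes\cF_M$. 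The principal obstacle is controlling the combinatorial resummation over $K$ with the correct Koszul signs and verifying that only ``connected'' contractions survive in the logarithm; an alternative, and perhaps cleaner, route is to introduce auxiliary Grassmann sources for the mixed couplings, recast $Z$ as a formal fermionic Gaussian integral, and evaluate it directly against the known closed overlap.
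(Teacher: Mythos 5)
Your proposal is correct in outline but follows a genuinely different route from the paper. You reduce everything to a Wick/Gaussian-overlap computation: split the exponent into $X_{ab}+X_{main}+X_{mix}$ (these do pairwise commute, since all the operators involved are creators and hence pairwise anticommute), pull $\exp(X_{main})$ out of the partial inner product, treat the main-sector creators attached to $X_{mix}$ as Grassmann sources, and invoke the linked-cluster exponentiation of connected contractions to get $Z=\tilde{G}^\bE(V)_{closed}\cdot\exp(X_{new})|0\rangle$. The paper instead never invokes Wick's theorem: it repeatedly applies the elementary identity $e^Ae^B=e^{[A,B]}e^Be^A$ (valid when $[A,B]$ is central) to shuffle the factors $\exp(\cA_{ip})$, $\exp(\cA_{pi})$ past the gluing exponential, generating at each stage correction operators $\cA_{ij}(k)$ divisible by $Q^k$; the $Q$-adic filtration then guarantees the infinite iteration converges and the corrections sum to the quadratic $\mathcal{R}_{ij}$. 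Your approach buys conceptual clarity (it is the statement that fermionic Gaussian states are closed under taking overlaps with insertions) and would give a closed-form expression for $\tilde{A}^{\alpha\beta}_{mn}$ in one shot; the paper's approach buys self-containedness, needing only one BCH-type lemma and no cluster expansion. Two points in your plan still need care. First, the step you yourself flag as the principal obstacle --- that only connected contractions survive in the logarithm, with the correct Koszul signs --- is exactly the content that must be proved, and it is the analogue of the paper's entire \S 6; the Grassmann-source reformulation you suggest is the right way to make it rigorous, since the main-sector creators genuinely anticommute with everything in sight. Second, your phrase ``finite minor in the entries'' is slightly imprecise: each contraction of an inserted $ab$-mode against the two Gaussians is an infinite sum over modes $r\geq 0$ (compare $B^{21}_{mn}=\sum_{r\geq 0}Q^{r+m+1}A_{rm}A^{21}_{rn}$ in the paper), which converges only $Q$-adically because the gluing vector's coefficients carry the factor $Q^{m+n+1}$; this same divisibility is what makes $\tilde{G}^\bE(V)_{closed}$ invertible (constant term $1$) and must be tracked in your resummation just as it is in the paper's iteration.
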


The proof of Theorem \ref{thm:self-gluing rule}
will be presented in \S \ref{sec:proof of self-gluing rule}.

\section{Gluing of two Bogoliubov transforms}
In this section, we apply the result in \S \ref{sec:self-gluing} to study the gluing of two arbitrary Bogoliubov transforms.

\subsection{Gluing principle for Bogoliubov transforms}\label{subsec:gluing}
Let $V_1 \in \cF_1\otimes\cdots\otimes \cF_M \otimes \cF_a$ and $V_2 \in \cF_{M+1}\otimes\cdots\otimes \cF_{M+N} \otimes \cF_{b}$ be two Bogoliubov transforms. We write
\begin{equation*}
\begin{split}
&V_1 = \exp\left(\sum_{i,j\in\{1,\cdots , M, a\}}\sum_{m , n \geq 0}A^{ij}_{mn}\psi^i_\bm\psi^{j*}_{-\bn}\right)|0\rangle,\\
&V_2 = \exp\left(\sum_{i,j\in\{b,M+1,\cdots , M+N\}}\sum_{m , n \geq 0}A^{ij}_{mn}\psi^i_\bm\psi^{j*}_{-\bn}\right)|0\rangle,
\end{split}
\end{equation*}
where $A^{ij}_{mn}$ are certain coefficients maybe with parameters.
Then their tensor product $V_1\otimes V_2$ is a Bogoliubov transform in the
$(M+N+2)$-component fermionic Fock space $\cF_1\otimes\cdots\otimes \cF_{M+N}\otimes \cF_a\otimes \cF_b$.
It has the form
\be
V_1\otimes V_2 = \exp\left(\sum_{i,j\in\{a, b, 1,\cdots , M+N\}}\sum_{m , n \geq 0}A^{ij}_{mn}\psi^i_\bm\psi^{j*}_{-\bn}\right)|0\rangle,
\ee
where we set $A^{ij}_{mn} = 0$ if $i$ and $j$ are not in the same set $\{a, 1, \cdots , M\}$ or $\{b, M+1, \cdots , M+N\}$.
With the gluing vector $P^\bE_{ab}$ defined in \eqref{eqn:the gluing operator},
we define the gluing $\tilde{G}^\bE(V_1 , V_2)$,
the closed part of the gluing $\tilde{G}^\bE(V_1 , V_2)_{closed}$,
and the normalized gluing $G^\bE(V_1 , V_2)$ of $V_1$ and $V_2$
to be $\tilde{G}^\bE(V_1\otimes V_2)$, $\tilde{G}^\bE(V_1\otimes V_2)_{closed}$,
and $G^\bE(V_1\otimes V_2)$ respectively.
Note that we have
$G^\bE(V_1 , V_2) = \tilde{G}^\bE(V_1 , V_2)/\tilde{G}^\bE(V_1 , V_2)_{closed}$.

By the self-gluing principle for Bogoliubov transforms stated in Theorem \ref{thm:self-gluing rule},
we immediately get the gluing principle for two Bogoliubov transforms,
which says that the normalized gluing of two Bogoliubov transforms is again a Bogoliubov transform.

\begin{thm}\label{thm:gluing rule}
Let $V_1 \in \cF_1\otimes\cdots\otimes \cF_M \otimes \cF_a$ and
$V_2 \in \cF_{M+1}\otimes\cdots\otimes \cF_{M+N} \otimes \cF_{b}$ be two Bogoliubov transforms.
Then the normalized gluing $G^\bE(V_1 , V_2)$ of $V_1$ and $V_2$ is a Bogoliubov transform
in  $\cF_1\otimes\cdots\otimes \cF_{M+N}$.
\end{thm}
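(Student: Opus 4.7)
The plan is to reduce Theorem \ref{thm:gluing rule} directly to Theorem \ref{thm:self-gluing rule}. The crucial observation is already encoded in the definitions of Section \ref{subsec:gluing}: the gluing data of $V_1$ and $V_2$ is defined by first forming the tensor product $V_1 \otimes V_2$ on the $(M+N+2)$-component Fock space $\cF_1 \otimes \cdots \otimes \cF_{M+N} \otimes \cF_a \otimes \cF_b$ and then self-gluing along the two marked factors $\cF_a$ and $\cF_b$. So once one verifies that $V_1 \otimes V_2$ is itself a Bogoliubov transform on this larger Fock space, Theorem \ref{thm:self-gluing rule} applies on the nose.

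First I would verify that $V_1 \otimes V_2$ has the exponential form given by the displayed equation in Section \ref{subsec:gluing}. Writing $V_1 = e^{X_1}|0\rangle$ and $V_2 = e^{X_2}|0\rangle$ with $X_1$ and $X_2$ quadratic in fermionic creators, the identity to establish is $(e^{X_1}|0\rangle) \otimes (e^{X_2}|0\rangle) = e^{X_1 + X_2}|0\rangle$, where on the right $X_1$ and $X_2$ are regarded as operators on the combined Fock space that act nontrivially only on their respective factors. This follows because the Koszul sign convention \eqref{eqn:sign convention} forces every fermionic operator with index in $\{1,\dots,M,a\}$ to anti-commute with every fermionic operator with index in $\{b, M{+}1,\dots,M{+}N\}$; hence the two quadratic expressions $X_1$ and $X_2$ commute and the two exponentials multiply without a BCH correction. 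Extending the coefficient array by setting $A^{ij}_{mn} = 0$ whenever $(i,j)$ straddles the two blocks then produces exactly the form displayed for $V_1 \otimes V_2$.

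Second, I would invoke Theorem \ref{thm:self-gluing rule} with this $V := V_1 \otimes V_2$ and the gluing vector $P^\bE_{ab}$ of \eqref{eqn:the gluing operator}. By the very definitions of $\tilde{G}^\bE(V_1,V_2)$, $\tilde{G}^\bE(V_1,V_2)_{closed}$ and $G^\bE(V_1,V_2)$ given in Section \ref{subsec:gluing}, these agree termwise with $\tilde{G}^\bE(V_1\otimes V_2)$, $\tilde{G}^\bE(V_1\otimes V_2)_{closed}$ and $G^\bE(V_1\otimes V_2)$ respectively. The conclusion of Theorem \ref{thm:self-gluing rule} then says that $G^\bE(V_1\otimes V_2)$ is a Bogoliubov transform in the $M{+}N$-component Fock space $\cF_1 \otimes \cdots \otimes \cF_{M+N}$, which is precisely the statement of Theorem \ref{thm:gluing rule}.

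No substantive obstacle remains at this level; all the analytic and combinatorial content is absorbed into the self-gluing principle whose proof is deferred to Section \ref{sec:proof of self-gluing rule}. The only point that warrants attention in the present reduction is the correct handling of the fermionic signs when promoting the pair $(V_1,V_2)$ to a single state on the combined Fock space, and this is routine given the explicit Koszul convention already fixed in \eqref{eqn:sign convention}.
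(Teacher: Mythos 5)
Your reduction is exactly the paper's own argument: the gluing is defined as the self-gluing of $V_1\otimes V_2$, one observes that $V_1\otimes V_2$ is itself a Bogoliubov transform on the $(M+N+2)$-component Fock space (with the cross-block coefficients set to zero), and Theorem \ref{thm:self-gluing rule} then gives the conclusion immediately. Your extra care about the Koszul signs and the commuting of the two exponentials is a correct elaboration of a step the paper leaves implicit, but the route is the same.
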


\begin{rem}
It may be interesting to consider the gluing of two general tau functions of multi-component KP hierarchies.
Let $V_1 \in \cF_1\otimes\cdots\otimes \cF_M \otimes \cF_a$
and $V_2 \in \cF_{M+1}\otimes\cdots\otimes \cF_{N} \otimes \cF_{b}$
that can be represented as
\begin{equation*}
\begin{split}
&V_1 = \exp\left(\sum_{i,j\in\{a,1,\cdots , M\}}\sum_{m , n \in\mathbb{Z}}A^{ij}_{mn}:\psi^i_\bm\psi^{j*}_{\bn}:\right)|0\rangle,\\
&V_2 = \exp\left(\sum_{i,j\in\{b,M+1,\cdots , M+N\}}\sum_{m , n \in\mathbb{Z}}A^{ij}_{mn}:\psi^i_\bm\psi^{j*}_{\bn}:\right)|0\rangle.
\end{split}
\end{equation*}
With the gluing vectors $P^\bE_{ab}$, we can define the gluing  and normalized gluing of
$V_1$ and $V_2$ similarly  as in the case of Bogoliubov transforms.
Motivated by Theorem \ref{thm:gluing rule}, it is natural
to expect that the normalized gluing of $V_1$ and $V_2$ is again a tau function of the $(M+N)$-component KP hierarchy.
The rough meaning here is that the gluing of two KP integrable systems may give us a new KP integrable system.
\end{rem}

\subsection{Specialization of the gluing vectors}\label{subsec:special gluing}
In the context of topological vertex theory, we need to consider a special kind of gluing
of fermionic states.
As we will see, this can be realized as the gluing of Bogoliubov transforms defined in the above subsections,
with a special choice of the gluing vectors.

The gluing vectors we need, denoted by $|P^f_{ab}\rangle$, are vectors
in $\cF_a\otimes\cF_b$ depending on an integer $f$ represented as

\be\label{eqn:special gluing operator}
|P^f_{ab}\rangle = \exp\left[\sum_{n=0}^\infty Q^{n+1/2}(\Theta\epsilon_n\psi_\bn^a\psi_{-\bn}^{b*}+\Theta^{-1}\epsilon'_n
\psi_\bn^b\psi_{-\bn}^{a*})\right]|0\rangle,
\ee
where $Q$ and $\Theta$ are formal variables,  $\epsilon_n , \epsilon'_{n}$ are coefficients with a parameter $q$ that are given by
\be
\begin{split}
&\epsilon_n = i^{1+f}(-1)^{(1+f)n}q^{fn(n+1)/2},\\
&\epsilon'_n = i^{1+f}(-1)^{(1+f)n}q^{-fn(n+1)/2}.
\end{split}
\ee
The vectors $|P^f_{ab}\rangle$ were introduced in \cite{ADKMV} with the motivation of the gluing rule for the topological vertex.
They can be viewed as a formal power series of $Q^{1/2}$ , $\Theta$ and $\Theta^{-1}$ with coefficients in $\cF_a\otimes\cF_b$
with parameters. If we replace $Q$ by $Q^{1/2}$  and set
\begin{eqnarray*}
E^{ij}_{mn} =
\begin{cases}
\delta_{mn}\epsilon_n, & \text{if $i = a,\ j = b,$}, \\
\delta_{mn}\epsilon'_n, &  \text{if $i = b,\ j = a$},\\
\end{cases}
\end{eqnarray*}
in \eqref{eqn:the gluing operator}, then we get $|P^f_{ab}\rangle = P^\bE_{ab}$.\\

When considering the self-gluing and gluing of  Bogoliubov transforms  with respect to the gluing vector $|P^f_{ab}\rangle$, we will denote
$\tilde{G}^\bE(V_1 , V_2)$, $\tilde{G}^\bE(V_1 , V_2)_{closed}$, $G^\bE(V_1 , V_2)$ by $\tilde{G}^f(V_1 , V_2)$,
$\tilde{G}^f(V_1 , V_2)_{closed}$, $G^f(V_1 , V_2)$ respectively, and $\tilde{G}^\bE(V)$, $\tilde{G}^\bE(V)_{closed}$, $G^\bE(V)$ by $\tilde{G}^f(V )$, $\tilde{G}^f(V )_{closed}$, $G^f(V )$ respectively.

A direct corollary of Theorem \ref{thm:self-gluing rule} and Theorem \ref{thm:gluing rule}\label{thm:special gluing rule} is the following

\begin{thm}
1). Let $V\in \cF_1\otimes\cdots\otimes \cF_M \otimes \cF_a\otimes \cF_b$  be a Bogoliubov transform.
Then the normalized self-gluing $G^f(V)$  is a Bogoliubov transform in  $\cF_1\otimes\cdots\otimes \cF_{M}$; \\
2). Let $V_1 \in \cF_1\otimes\cdots\otimes \cF_M \otimes \cF_a$ and $V_2 \in \cF_{M+1}\otimes\cdots\otimes \cF_{M+N} \otimes \cF_{b}$ be two Bogoliubov transforms. Then the normalized gluing $G^f(V_1 , V_2)$  is a Bogoliubov transform in
 $\cF_1\otimes\cdots\otimes \cF_{M+N}$.
\end{thm}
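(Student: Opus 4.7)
The plan is to reduce the theorem directly to Theorem \ref{thm:self-gluing rule} and Theorem \ref{thm:gluing rule} by recognizing $|P^f_{ab}\rangle$ as a specialization of the general gluing vector $P^\bE_{ab}$. First I would verify carefully the identification already indicated in \S \ref{subsec:special gluing}: after the substitution $Q \mapsto Q^{1/2}$ and with
\[
E^{ab}_{mn} = \delta_{mn}\epsilon_n, \qquad E^{ba}_{mn} = \delta_{mn}\epsilon'_n, \qquad E^{aa}_{mn} = E^{bb}_{mn} = 0,
\]
the sign exponent $\Theta^{\epsilon^{ab}_{ij}}$ in \eqref{eqn:the gluing operator} produces $\Theta$ on the $(a,b)$-term and $\Theta^{-1}$ on the $(b,a)$-term, while the vanishing of $E^{aa}$ and $E^{bb}$ removes the same-leg contributions. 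A brief check that the Koszul convention \eqref{eqn:sign convention} does not introduce extra signs when the exponential is expanded will confirm that the resulting element is equal, term by term, to $|P^f_{ab}\rangle$ as written in \eqref{eqn:special gluing operator}.

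Next, for part 1), I would apply Theorem \ref{thm:self-gluing rule} to the Bogoliubov transform $V\in \cF_1\otimes\cdots\otimes \cF_M\otimes\cF_a\otimes\cF_b$ with this specific choice of $\bE$. The theorem gives that $G^\bE(V)$ is a Bogoliubov transform in $\cF_1\otimes\cdots\otimes\cF_M$; under the identification above one has $G^f(V) = G^\bE(V)$, yielding part 1). For part 2), I would apply Theorem \ref{thm:gluing rule} to the tensor product $V_1\otimes V_2$ in $\cF_1\otimes\cdots\otimes\cF_{M+N}\otimes\cF_a\otimes\cF_b$ with the same specialization of the gluing vector, obtaining $G^f(V_1,V_2) = G^\bE(V_1,V_2)$ as a Bogoliubov transform in $\cF_1\otimes\cdots\otimes\cF_{M+N}$.

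Since the statement is presented as a direct corollary, no substantively new input is required. The only point worth a second look is the passage $Q \mapsto Q^{1/2}$: while $P^\bE_{ab}$ is a formal power series in $Q$ and $\Theta^{\pm 1}$, the specialization $|P^f_{ab}\rangle$ is naturally a formal Laurent series in $Q^{1/2}$. One should therefore treat $Q^{1/2}$ as a formal parameter throughout, i.e.\ enlarge the scalar ring used in Theorems \ref{thm:self-gluing rule} and \ref{thm:gluing rule}. Because those proofs are entirely algebraic in $Q$, $\Theta$, and the coefficients $E^{ij}_{mn}$, they carry over verbatim after this harmless extension, so this is a formality rather than a genuine obstacle.
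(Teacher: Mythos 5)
Your proposal is correct and follows exactly the paper's route: the paper also obtains this theorem as an immediate corollary of Theorem \ref{thm:self-gluing rule} and Theorem \ref{thm:gluing rule} by observing that $|P^f_{ab}\rangle = P^\bE_{ab}$ after replacing $Q$ by $Q^{1/2}$ and setting $E^{ab}_{mn}=\delta_{mn}\epsilon_n$, $E^{ba}_{mn}=\delta_{mn}\epsilon'_n$, $E^{aa}=E^{bb}=0$. Your extra remark about treating $Q^{1/2}$ as the formal parameter is a harmless refinement of the same argument.
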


The following lemma,
which was also observed in \cite{ADKMV},
implies that the gluing of Bogoliubov transforms with $|P^f_{ab}\rangle$ as gluing vectors
indeed coincides with the gluing of topological vertex given in \cite{AKMV} and \cite{LLLZ}.

\begin{lem}\label{lem:prop. of the special glu. op.}
Let $\mu_a$ and $\mu_b$ be two partitions, then
\be
\langle\mu_a|\otimes\langle\mu_b|P^f_{ab}\rangle = \delta_{\mu_a\mu_b^t}(-1)^{(f+1)|\mu_a|}q^{f\kappa_{\mu_a}/2}Q^{|\mu|}.
\ee
\end{lem}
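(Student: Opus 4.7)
My plan is to compute the pairing by expanding the exponential directly. Under the Koszul sign convention \eqref{eqn:sign convention}, the quadratic operators
$$A_n \;=\; Q^{n+\frac{1}{2}}\,\Theta\,\epsilon_n\,\psi^a_{\bn}\psi^{b*}_{-\bn}, \qquad B_n \;=\; Q^{n+\frac{1}{2}}\,\Theta^{-1}\epsilon'_n\,\psi^b_{\bn}\psi^{a*}_{-\bn}$$
pairwise commute (a direct check using \eqref{eqn:sign convention}: any two involve four anticommutations of their constituent fermions, giving $(-1)^4=1$) and each is nilpotent of square zero (since the two $\psi^a$'s, or the two $\psi^{b*}$'s, collide after one Koszul move and $(\psi^a_\bn)^2=(\psi^{b*}_{-\bn})^2=0$). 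Hence the exponential factorizes, and expanding each binomial gives a formal sum over pairs of finite subsets $S_A,\,S_B\subset\mathbb{Z}_{\ge 0}$:
$$|P^f_{ab}\rangle \;=\; \prod_{n\ge 0}(1+A_n)(1+B_n)|0\rangle \;=\; \sum_{S_A,\,S_B}\Bigl(\prod_{n\in S_A}A_n\Bigr)\Bigl(\prod_{m\in S_B}B_m\Bigr)|0\rangle.$$

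Charge counting then isolates the single term contributing to $\langle\mu_a|\otimes\langle\mu_b|P^f_{ab}\rangle$. Since $A_n$ carries $(\cF_a,\cF_b)$-bi-charge $(+1,-1)$ and $B_n$ carries $(-1,+1)$, while $\langle\mu_a|\otimes\langle\mu_b|$ projects onto the $(0,0)$ sector, only pairs with $|S_A|=|S_B|=k$ contribute. For such a term, the $a$-factor is acted on by the creators $\{\psi^a_{\bn}:n\in S_A\}$ and the below-vacuum annihilators $\{\psi^{a*}_{-\bm}:m\in S_B\}$, which by formula \eqref{eq:operator rep for mu} produces, up to an overall sign, the Frobenius-form state $|(S_A\,|\,S_B)\rangle_a$; dually the $b$-factor becomes $|(S_B\,|\,S_A)\rangle_b$. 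Matching with $|\mu_a\rangle\otimes|\mu_b\rangle$ therefore forces $\mu_a=(a_1,\dots,a_k|b_1,\dots,b_k)$ and $\mu_b=(b_1,\dots,b_k|a_1,\dots,a_k)=\mu_a^t$ with $S_A=\{a_i\}$ and $S_B=\{b_i\}$, supplying the Kronecker delta $\delta_{\mu_a\mu_b^t}$.

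For the scalar coefficient, the $Q$-factors combine as $Q^{\sum_i(a_i+\frac{1}{2})+\sum_i(b_i+\frac{1}{2})}=Q^{|\mu_a|}$ using the Frobenius identity $|\mu_a|=k+\sum a_i+\sum b_i$, the $\Theta^{\pm 1}$ factors cancel, and the product $\prod_i\epsilon_{a_i}\epsilon'_{b_i}$ simplifies via $(i^{1+f})^{2k}=(-1)^{k(1+f)}$ to
$$(-1)^{(1+f)|\mu_a|}\;q^{(f/2)\sum_i\bigl[a_i(a_i+1)-b_i(b_i+1)\bigr]},$$
whose $q$-exponent equals $f\kappa_{\mu_a}/2$ by the standard Frobenius identity $\kappa_\mu=\sum_i[a_i(a_i+1)-b_i(b_i+1)]$. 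The remaining technical hurdle, and the main obstacle, is the fermionic sign bookkeeping: one must reorder the operator string $\prod_i(\psi^a_{a_i+\frac{1}{2}}\psi^{b*}_{-a_i-\frac{1}{2}})\prod_j(\psi^b_{b_j+\frac{1}{2}}\psi^{a*}_{-b_j-\frac{1}{2}})|0\rangle$ into the canonical tensor $|\mu_a\rangle_a\otimes|\mu_b\rangle_b$, accounting for the Koszul crossings that separate $a$- from $b$-operators, the intra-factor reshufflings needed to match the product form of \eqref{eq:operator rep for mu}, and the prefactors $(-1)^{\sum b_i}$, $(-1)^{\sum a_i}$ built into the Frobenius representations of $|\mu_a\rangle$ and $|\mu_a^t\rangle$. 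Combined with the $\epsilon$-scalar above, this sign analysis delivers the stated prefactor and completes the proof.
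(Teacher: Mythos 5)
Your proposal follows essentially the same route as the paper: expand the exponential into the commuting, square-zero product $\prod_{n}(1+A_n\psi^a_{\bn}\psi^{b*}_{-\bn})\prod_{n}(1+B_n\psi^b_{\bn}\psi^{a*}_{-\bn})|0\rangle$, use charge counting to keep only terms with equally many $A$- and $B$-factors, identify the survivors as $|\gamma\rangle\otimes|\gamma^t\rangle$ in Frobenius form (yielding $\delta_{\mu_a\mu_b^t}$), and assemble the scalar from the identities $|\gamma|=k+\sum m_i+\sum n_i$ and $\kappa_\gamma=\sum_i\bigl[m_i(m_i+1)-n_i(n_i+1)\bigr]$. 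The one step you defer to the reader --- the fermionic sign bookkeeping needed to reconcile the operator string with the Frobenius representation \eqref{eq:operator rep for mu} --- is precisely the step the paper itself dispatches by recording a factor $(-1)^{m_i+n_i}$ per hook without further derivation, so your argument is complete to the same standard as the published one.
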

\begin{proof}
Let $A_n = Q^{n+1/2}\Theta\epsilon_n , B_n = Q^{n+1/2}\Theta^{-1}\epsilon'_n$. By \eqref{eqn:CR}, we have
$$|P^f_{ab}\rangle =\prod_{n=1}^\infty(1+A_n \psi_\bn^a\psi_{-\bn}^{b*}) \prod_{n=1}^\infty(1+B_n\psi_\bn^b\psi_{-\bn}^{a*}).$$
If we denote by $|P^f_{ab}\rangle_0$ the projection of $|P^f_{ab}\rangle_0$ on the subspace $\cF_a^{(0)}\otimes\cF_b^{(0)}$ with
respect to the decomposition
$$(\mathcal{F}_a\otimes \mathcal{F}_b)^{(0)} = \bigoplus_{n\in \mathbb{Z}}(\mathcal{F}_a^{(n)}\otimes \mathcal{F}_b^{(-n)}),$$
then we have
\begin{equation*}
\begin{split}
|P^f_{ab}\rangle_0& = \sum_{\substack{m_1> \cdots  >m_k\geq 0\\n_1> \cdots  > n_k\geq 0}}\prod_{i=1}^k A_{m_i}B_{n_i}
\psi_{\bm_i}^a\psi_{-\bm_i}^{b*}\psi_{\bn_i}^b\psi_{-\bn_i}^{a*}|0\rangle\\
&=\sum_{\gamma =(m_1, \cdots , m_k |n_1, \cdots , n_k)} \prod_{i=1}^k (-1)^{m_i+n_i}A_{m_i}B_{n_i} |\gamma\rangle\otimes|\gamma^t\rangle\\
&=\sum_\gamma\prod_{i=1}^k Q^{m_i + n_i +1}(-1)^{(1+f)(m_i + n_i +1)}q^{f(m_i(m_i+1)-n_i(n_i+1))/2}|\gamma\rangle\otimes|\gamma^t\rangle.
\end{split}
\end{equation*}
By lemma \ref{lm:kappa}, we see
$$|P^f_{ab}\rangle_0 = \sum_\gamma (-1)^{(1+f)|\gamma|}q^{f\kappa_\gamma/2}Q^{|\gamma|}|\gamma\rangle\otimes|\gamma^t\rangle.$$
By the definition of the inner product on $\cF_a\otimes\cF_b$, we have
$$\langle\mu_a|\otimes\langle\mu_b|P^f_{ab}\rangle = \delta_{\mu_a\mu_b^t}(-1)^{(f+1)|\mu_a|}q^{f\kappa_{\mu_a}/2}Q^{|\mu_a|}.$$
\end{proof}

\section{Fermionic gluing principle of the topological vertex}
 In this section, we apply the results in previous  sections to study gluing of
 the topological vertex in the fermionic picture.
 The main aim here is to establish the fermionic gluing principle of the topological vertex,
 that is, assuming the framed ADKMV conjecture,
 the generating functions of the open Gromov-Witten invariants of all toric
 Clabi-Yau threefolds are Bogoliubov transforms (see Theorem \ref{thm:ferm. glu. of tv} for the exact formulation).

\subsection{The topological vertex and its gluing}\label{subsec:topological vertex}
We give a brief introduction to the theory of topological vertex that introduced in \cite{AKMV},
with a formulation that  fits for our context.

To each toric Calabi-Yau threefold $X$, we can attach a trivalent planar graph $\Gamma$
to it that encodes the loci of degeneration of the $T^2\times \mathbb{R}$ fibcration
of $X$ over $\mathbb{R}^3$. The planar graph $\Gamma$ is called the \emph{toric diagram} of $X$ (see \cite{AKMV} for details).
The toric diagram of $\mathbb{C}^3$ is a trivalent vertex.
Each toric Calabi-Yau threefold can be constructed by gluing $\mathbb{C}^3$ pieces,
which reflects the fact that each toric diagram can be constructed by gluing trivalent vertices.

The generating function of Gromov-Witten invariants of $\mathbb{C}^3$ is given by the topological vertex introduced in \cite{AKMV}
and \cite{LLLZ}. The topological vertex introduced in \cite{AKMV} is defined by

\begin{eqnarray} \label{eqn:TV}
W_{\mu^1, \mu^2, \mu^3}(q)
= \sum_{\rho^1, \rho^3}c_{\rho^1(\rho^3)^t}^{\mu^1(\mu^3)^t}q^{\kappa_{\mu^2}/2+\kappa_{\mu^3}/2}
\frac{W_{(\mu^2)^t\rho^1}(q)W_{\mu^2(\rho^3)^t}(q)}{W_{\mu^2\emptyset}(q)},
\end{eqnarray}
where
$$ c_{\rho^1(\rho^3)^t}^{\mu^1(\mu^3)^t}
= \sum_{\eta} c_{\eta\rho^1}^{\mu^1}c_{\eta(\rho^3)^t}^{(\mu^3)^t},
$$
and the constants $c_{\nu\lambda}^\mu$ are the Littlewood-Richardson coefficients defined in \S \S \ref{subsec:Schur & skew schur}.
It can also be represented in terms of specialization of skew Schur functions as follows (see e.g. \cite{Zh4}):
\be \label{eqn:WSkewSchur}
W_{\mu^1, \mu^2, \mu^3}(q)
= (-1)^{|\mu^2|} q^{\kappa_{\mu^3}/2}
s_{(\mu^2)^t}(q^{-\rho}) \sum_{\eta}
s_{\mu^1/\eta}(q^{(\mu^2)^t+\rho})
s_{(\mu^3)^t/\eta}(q^{\mu^2+\rho}),
\ee
where $q^{\mu+\rho}=(q^{\mu_i-i+1/2})_{i=1,2,\cdots}$.

It is also necessary to consider framings of $\mathbb{C}^3$ and their effects
on the Gromov-Witten invariants.
For the special case $\mathbb{C}^3$, if we index the three edges of
the trivalent vertex by 1, 2 and 3 clockwise,
then the framing of $\mathbb{C}^3$ can be labeled by elements in $\mathbb{Z}^3$.
The topological vertex given by \eqref{eqn:TV} encodes the Gromov-Witten invariants of
$\mathbb{C}^3$ with the canonical framing $(0,0,0)$.
In general, the framed topological vertex with framing $(a_1, a_2, a_3)$,
which encodes the Gromov-Witten invariants of $\mathbb{C}^3$ with
framing $(a_1, a_2, a_3)$, is given by:
\be
W^{(a_1, a_2, a_3)}_{\mu^1, \mu^2, \mu^3}(q) =(-1)^{\sum_{i=1}^3|\mu^i|a_i} q^{\sum_{i=1}^3 a_i\kappa_{\mu^i}/2}
W_{\mu^1, \mu^2, \mu^3}(q).
\ee
The corresponding generating function
\be\label{eqn:framed TV}
Z^{(a_1, a_2, a_3)}(q; \bx^1;\bx^2;\bx^3) = \sum_{\mu^{1},\mu^{2},\mu^{3}}
W^{(a_1,a_2,a_3)}_{\mu^1, \mu^2, \mu^3}(q) s_{\mu^1}(\bx^1) s_{\mu^2}(\bx^2) s_{\mu^3}(\bx^3),
\ee
is a vector in $\Lambda^{\otimes 3}$, where, as before, $\Lambda$ is the space of symmetric functions.

Now we consider general toric Calabi-Yau threefolds.
Let $X$ be a toric Calabi-Yau threefold whose toric diagram is $\Gamma$.
Assume $\Gamma$ has $h$ vertices and $g$ loops.
Then the Gromov-Witten invariants of $X$ with
arbitrary framing can be deduced inductively from the framed topological vertex
by gluing procedure as follows:

$Case\ 1$. Assume $\Gamma$ can be constructed by gluing another toric diagram
$\Gamma'$ and the trivalent vertex along a noncompact edge.
Then the number of vertices of $\Gamma'$ is $h-1$.
Denote by $L_1, \cdots , L_{n-2}, L_a$  the noncompact edges of $\Gamma'$ and by $L_b, L_{n-1}, L_{n}$
the edges of the trivalent vertex ordered clockwise.
Assume we glue  $\Gamma'$ and the  trivalent vertex along $L_a$ and $L_b$.
Assume the generating function of the Gromov-Witten invariants of the toric Calabi-Yau
threefold corresponding to $\Gamma'$ with framing $a_1,\cdots a_{n-2}$ on edges
$L_1, \cdots, L_{n-2}$ and with canonical framing on the edge $L_a$ is given by
$$Z'(\bx^1,\cdots,\bx^{n-2},\bx^a) = \sum_{\mu^1,\cdots,\mu^{n-2}, \mu^a}Z'_{\mu^1,\cdots, \mu^{n-2},\mu^a}s_{\mu^1}(\bx^1)\cdots s_{\mu^{n-2}}(\bx^{n-2})s_{\mu^a}(\bx^a),$$
which is viewed as a vector in $\Lambda^{\otimes (n-1)}$ with certain parameters.
Then the generating function $Z$ of the Gromov-Witten invariants of $X$ with framing
$a_1,\cdots, a_{n}$ on the edges $L_1,\cdots, L_n$ is given by
\begin{equation}\label{eqn:gluing GV}
\begin{split}
&Z (\bx^1,\cdots,\bx^{n})\\
= &\sum_{\mu^1,\cdots,\mu^n,\mu}Z'_{\mu^1,\cdots, \mu^{n-2},\mu}(-1)^{|\mu|} Q^{|\mu|}W^{(f,a_{n-1},a_n)}_{\mu^t, \mu^{n-1}, \mu^{n}} s_{\mu^1}(\bx^1)\cdots s_{\mu^{n}(\bx^n)}\\
=&\sum_{\mu^1,\cdots,\mu^n,\mu}Z'_{\mu^1,\cdots, \mu^{n-2},\mu}(-1)^{(f+1)|\mu|}q^{\kappa_\mu/2} Q^{|\mu|}W^{(0,a_{n-1},a_n)}_{\mu^t, \mu^{n-1}, \mu^{n}}
s_{\mu^1}(\bx^1)\cdots s_{\mu^{n}(\bx^n)},
\end{split}
\end{equation}
where $Q = e^{-t}$ and $t$ is the K\"{a}ler parameter corresponding to the compact edge obtained by gluing $L_a$ and $L_b$.
The framing $f$ on the edge $L_b$ in \eqref{eqn:gluing GV} is determined by
the compatibility  with the canonical framing on the edge $L_a$ (for more details, see \cite{AKMV}).

$Case\ 2$. Assume $\Gamma$ can be constructed from another toric diagram $\Gamma''$
by gluing two noncompact edges.
Then the number of loops in $\Gamma''$ is $g-1$.
Let $L_1,\cdots, L_n, L_a, L_b$ be noncompact edges of $\Gamma''$,
and assume $\Gamma$ is obtained form $\Gamma''$ by gluing the edges $L_a$ and $L_b$.
Assume
\begin{equation*}
\begin{split}
    &Z''(\bx^1,\cdots,\bx^{n},\bx^{a},\bx^{b})\\
  = &\sum_{\mu^1,\cdots,\mu^n,\mu_a,\mu_b}Z''_{\mu^1,\cdots,\mu^n,\mu_a,\mu_b}
s_{\mu^1}(\bx^1)\cdots s_{\mu^n}(\bx^n)s_{\mu^a}(\bx^a)s_{\mu^b}(\bx^b)
\end{split}
\end{equation*}
is the generating function of the Gromov-Witten invariants of the toric Calabi-Yau threefold
$X''$ corresponding to $\Gamma''$ with framing $a_1,\cdots, a_{n}$ on the edges $L_1,\cdots, L_n$
and canonical framing on the two edges $L_a$ and $L_b$.
Then the generating function $Z$ of the Gromov-Witten invariants of $X$ with framing
$a_1,\cdots, a_{n}$ on the edges $L_1,\cdots, L_n$ is given by
\be\label{eqn:self-gluing GV}
\begin{split}
&Z (\bx^1,\cdots,\bx^{n})\\
= &\sum_{\mu^1,\cdots,\mu^n,\mu}Z''_{\mu^1,\cdots, \mu^{n},\mu,\mu^t}(-1)^{(f+1)|\mu|} Q^{|\mu|}q^{-f\kappa_u/2}s_{\mu^1}(\bx^1)\cdots s_{\mu^{n}(\bx^n)},
\end{split}
\ee
where $Q = e^{-t}$ as above and $t$ is the K\"{a}ler parameter corresponding to the
compact edge obtained by gluing $L_a$ and $L_b$,
and we also require that the framing $f$ on the edge $L_b$ matches with the canonical framing on $L_a$ .

In conclusion, we can compute
by induction (with respect to the numbers of vertices and loops of the toric diagrams)
the Gromov-Witten invariants of all toric Calabi-Yau threefolds with arbitrary framing,
with the framed topological vertex as the initial datum.
In this  process, two K\"ahler parameters corresponding to two compact edges are identified
if the two edges give rise to two homologous copies of $\mathbb{P}^1$ in $X$.

\subsection{Fermionic representation of the framed topological vertex}\label{subsec:framed ADKMV}

The framed topological vertex given in \eqref{eqn:framed TV} is a vector in the space $\Lambda^{\otimes 3}$,
where $\Lambda$ is the space of symmetric functions.
By the boson-fermion correspondence,
it corresponds to a vector  in $\cF^{(0)}_1\otimes\cF^{(0)}_2\otimes\cF^{(0)}_3\subset \cF_1\otimes\cF_2\otimes\cF_3$,
where $\cF_i, i=1, 2,3$ are three copies of the fernionic Fock space $\cF$, and $\cF^{(0)}_i$
is the charge 0 subspace of $\cF_i.$
Though the topological vertex given in \eqref{eqn:TV} has an extremely complicated expression
in terms of symmetric functions,
it is conjectured in \cite{AKMV} and \cite{ADKMV} that it has a simple expression in the fermionic picture.
This conjecture was referred to as the ADKMV conjecture in \cite{DZ}.
The ADKMV Conjecture states that, for arbitrary partitions $\mu^1,\ \mu^2,\ \mu^3,$
\be
W_{\mu^1, \mu^2, \mu^3}(q)
= \langle \mu^1|\otimes\langle \mu^2|\otimes\langle \mu^3| \exp \biggl( \sum_{i,j=1}^3\sum_{m,n\geq 0}
A_{mn}^{ij}(q) \psi^{i}_{\bm}\psi^{j*}_{-\bn} \biggr) \vac \otimes \vac \otimes \vac,
\ee
where for $i=1,2,3$, the coefficients are given by
\begin{eqnarray*}
&& A_{mn}^{ii}(q)  = (-1)^n  \frac{q^{m(m+1)/4-n(n+1)/4}}{[m+n+1][m]![n]!}, \\
&& A_{mn}^{i(i+1)}(q)  = (-1)^n q^{m(m+1)/4-n(n+1)/4+1/6}
\sum_{l=0}^{\min(m,n)} \frac{q^{(l+1)(m+n-l)/2}}{[m-l]![n-l]!}, \\
&& A_{mn}^{i(i-1)}(q)  = (-1)^{n+1} q^{-m(m+1)/4+n(n+1)/4-1/6}
\sum_{l=0}^{\min(m,n)} \frac{q^{-(l+1)(m+n-l)/2}}{[m-l]![n-l]!}.
\end{eqnarray*}
Here it is understood that $A^{34} = A^{30}$ and $A^{10} =A^{13}$.

The ADKMV conjecture was generalized to the framed topological vertex in \cite{DZ} as:
\be \label{eqn:FramedADKMV}
W^{(\ba)}_{\mu^1, \mu^2, \mu^3}(q)
= \langle \mu^1|\otimes\langle \mu^2|\otimes\langle \mu^3| \exp  \biggl( \sum_{i,j=1}^3\sum_{m,n\geq 0}
A_{mn}^{ij}(q;\ba) \psi^{i}_{\bm}\psi^{j*}_{-\bn} \biggr)  \vac \otimes \vac \otimes \vac
\ee
for $A^{ij}_{mn}(q;\ba)$ similar to $A^{ij}_{mn}(q)$ above:
\begin{equation*}
\begin{split}
A^{ii}_{mn}(q;\ba)
       =& (-1)^{(m+n+1)a_i+n} q^{(2a_i+1)(m(m+1)-n(n+1))/4}\frac{1}{[m+n+1][m]![n]!}, \\
A_{mn}^{i(i+1)}(q;\ba) =& (-1)^{ma_i+(n+1)a_{i+1}+n} q^{\frac{(2a_i+1)m(m+1)-(2a_{i+1}+1)n(n+1)}{4}+\frac{1}{6}}\\
 &\ \ \ \ \ \sum_{l=0}^{\min(m , n)}\frac{q^{\frac{1}{2}(l+1)(m+n-l)}}{[m-l]![n-l]!},\\
A_{mn}^{i(i-1)}(q;\ba) = &-(-1)^{ma_i+(n+1)a_{i-1}+n} q^{\frac{(2a_i+1)m(m+1)-(2a_{i-1}+1)n(n+1)}{4}-\frac{1}{6}}\\
&\ \ \ \ \sum_{l=0}^{\min(m , n)}\frac{q^{-\frac{1}{2}(l+1)(m+n-l)}}{[m-l]![n-l]!}.
 \end{split}
\end{equation*}
Here $\ba = a_1, a_2, a_3$.
We refer to this conjecture as the Framed ADKMV Conjecture.
The Framed ADKMV Conjecture for the one-legged and two-legged framed topological vertex was proved in \cite{DZ}.
It remains open to give a proof of the (framed) ADKMV conjecture for the full three-legged topological vertex.

A straightforward application of the ADKMV Conjecture and the Framed ADKMV Conjecture is that
they establish a connection between the topological vertex and integrable hierarchies as pointed out in \cite{ADKMV}.
\subsection{Fermionic gluing principle of the topological vertex - simple cases}
 Let $V^{(a_1,a_2,a_3)}_0\in \cF^{(0)}_1\otimes\cF^{(0)}_2\otimes\cF^{(0)}_3$ be the vector corresponding to the framed topological vertex given by \eqref{eqn:framed TV} under the boson-fermion correspondence. The framed ADKMV conjecture stated in the above subsection implies that there is a Bogoliubov transform $V^{(a_1,a_2,a_3)} \in \cF_1\otimes\cF_2\otimes\cF_3$ such that its projection to the subspace $\cF^{(0)}_1\otimes\cF^{(0)}_2\otimes\cF^{(0)}_3$ with respect to the charge decomposition coincides with $V^{(a_1,a_2,a_3)}_0$.

To show clearly how the fermionic gluing principle works,
we first consider two simple examples,
with the framed ADKMV conjecture as the starting point.

The first example is the total space $X_p$ of the bundle
$\mathcal{O}(p)\oplus\mathcal{O}(-p-2)\rightarrow \mathbb{P}^1, p\in\mathbb{Z}.$
The toric diagram $\Gamma_{X_p}$ of $X_p$ contains two vertices,
which reflects that $X_p$ can be constructed by gluing two $\mathbb{C}^3$ pieces.
Let $L_1,\cdots, L_4$ be the noncompact edges of $\Gamma_{X_p}$, with $L_1$, $L_2$
in one vertex and $L_3$, $L_4$ in the other.
Let $\tilde{Z}$ be the generating function of the Gromov-Witten invariants of $X_p$,
and let $Z_{closed}$ be the generating function of the closed Gromov-Witten invariants
(i.e., the Gromov-Witten invariants with no branes) of $X_p$.
Then $Z_{X_p}:=\tilde{Z}/Z_{closed}$ is the generating function of the open
Gromov-Witten invariants of $X_p$.
Under the boson-fermion correspondence (a multi-component version of \eqref{boson-fermion}),
$Z_{X_p}$ corresponds to a vector, say $V_{X_p}$, in $\cF_1^{(0)}\otimes\cdots\otimes\cF_4^{(0)}$.

For an integer $f$, provided the framed ADKMV conjecture, we can define the normalized gluing
$V=G^f(V^{(a_1, a_2, 0)}, V^{(0,a_3, a_4)})$
of $V^{(a_1, a_2, 0)}$ and $V^{(0,a_3, a_4)}$
with the special gluing vector $|P^f_{ab}\rangle$ defined as in \S \ref{subsec:special gluing},
where we label the edges of the first vertex by $L_1$, $L_2$, $L_a$,
and label those of the second vertex by $L_b$, $L_3$, $L_4$.
By Theorem \ref{thm:special gluing rule}, $V$ is a Bogoliubov transform in $\cF_1\otimes\cdots\otimes\cF_4$.
We assume $\Gamma_{X_p}$ is constructed by gluing the two vertices along $L_a$ and $L_b$.
By Lemma \ref{lem:prop. of the special glu. op.} and the gluing rule of the topological vertex
given in \eqref{eqn:gluing GV}, for a suitable choice of $f$ (depends on $p$),
the projection of $V$ on $\cF^{(0)}_1\otimes\cdots\otimes\cF^{(0)}_4$
coincides with $V_{X_p}$.
In other words, if we write
$$Z_{X_p}(\bx^1,\cdots,\bx^4)=\sum_{\mu_1, \cdots, \mu_4}Z_{\mu_1 \cdots \mu_4}s_{\mu_1}(\bx^1)\cdots s_{\mu_4}(\bx^4),$$
then we have $Z_{\mu_1 \cdots \mu_4}=\langle\mu_1|\otimes\cdots\otimes\langle\mu_4|V\rangle$,
for all partitions $\mu_1, \cdots, \mu_4$.

Combine Theorem \ref{thm:ferm. glu. of tv} and the solution in \cite{DZ} of  the famed ADKMV
conjecture for  the cases of the one-legged and the two-legged framed topological vertex , we directly get
\begin{thm}\label{thm:resolved conifold}
Let $X_p$ be the total space of the bundle $\mathcal{O}(p)\otimes\mathcal{O}(-p-2)\rightarrow\mathbb{P}^1$, $p\in\mathbb{Z}$.
Let
$$Z_{X_p}(\bx , \mathbf{y}) = \sum_{\mu , \nu}Z_{\mu,\nu}s_\mu(\bx)s_\mu(\mathbf{y})$$
be the generating  function of the open Gromov-Witten invariants of $X$ with two outer branes  on different vertices.
Then there is a Bogoliubov transform $V\in \cF_1\otimes\cF_2$ such that
$$Z_{\mu,\nu} = \langle\mu|\otimes\langle\nu|V\rangle$$
for all partitions $\mu, \nu$.
\end{thm}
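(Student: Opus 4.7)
The plan is to realize $Z_{X_p}$ as the normalized gluing of two 2-component Bogoliubov transforms along the compact edge of the toric diagram $\Gamma_{X_p}$, and then to invoke Theorem \ref{thm:special gluing rule} to conclude that the result is itself a Bogoliubov transform.

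First, I would label the four noncompact edges of $\Gamma_{X_p}$ so that $L_1$ and $L_3$ carry the two branes (with symmetric function variables $\bx$ and $\by$, respectively), while $L_2$ and $L_4$ are empty legs; cutting along the unique compact edge splits $\Gamma_{X_p}$ into two trivalent vertices joined through $L_a \sim L_b$, with K\"ahler parameter $Q=e^{-t}$. At each vertex, the contribution that actually enters the generating function is the framed topological vertex with its empty-leg index set to the empty partition: $W^{(a_1,a_2,0)}_{\mu_1,\emptyset,\mu_a}(q)$ at the first vertex and $W^{(f,a_3,a_4)}_{\mu_b,\mu_3,\emptyset}(q)$ at the second, where $f$ is the framing on $L_b$ determined by $p$ and the compatibility condition in \eqref{eqn:gluing GV}. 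Since only two legs per vertex carry nontrivial partitions, this is exactly the two-legged framed topological vertex. By the two-legged case of the Framed ADKMV Conjecture proved in \cite{DZ}, each vertex's contribution admits a fermionic realization as a 2-component Bogoliubov transform, giving $V_1 \in \cF_1 \otimes \cF_a$ and $V_2 \in \cF_b \otimes \cF_3$ whose bosonic matrix elements reproduce the required framed topological vertex values.

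Next I would apply the gluing part of Theorem \ref{thm:special gluing rule}: form the tensor product $V_1 \otimes V_2 \in \cF_1 \otimes \cF_a \otimes \cF_b \otimes \cF_3$, pair with the specialized gluing vector $|P^f_{ab}\rangle$ of Section 4.2 (with the same framing $f$), and take the normalized gluing $V := G^f(V_1, V_2) \in \cF_1 \otimes \cF_3$. The theorem guarantees that $V$ is a Bogoliubov transform. To identify $V$ with the generating function, Lemma \ref{lem:prop. of the special glu. op.} shows that contracting with $|P^f_{ab}\rangle$ converts a pair of partitions on the gluing legs into a single summation over $\gamma$ weighted by $(-1)^{(f+1)|\gamma|}q^{f\kappa_\gamma/2}Q^{|\gamma|}$, which is exactly the combination appearing in the topological vertex gluing rule \eqref{eqn:gluing GV}. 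Consequently $\langle\mu|\otimes\langle\nu|V\rangle$ equals the coefficient $Z_{\mu,\nu}$ of $s_\mu(\bx)s_\nu(\by)$ in $\tilde Z/Z_{closed} = Z_{X_p}$, once one observes that the closed-part normalization $\tilde G^f(V_1,V_2)_{closed}$ is precisely the closed-string generating function $Z_{closed}$. Relabelling $\cF_3$ as $\cF_2$ then produces the statement of the theorem.

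The hard part is bookkeeping rather than any structurally new calculation: one must verify the precise dependence of the gluing framing $f$ on $p$ (dictated by the canonical framing convention on $L_a$ used in \eqref{eqn:gluing GV}), check that signs, $q$-powers, and transposition conventions on the gluing index produced by Lemma \ref{lem:prop. of the special glu. op.} agree with those in the gluing rule of \cite{AKMV,LLLZ}, and identify the closed-string normalization with $Z_{closed}$. Once these routine matchings are in place, the result follows immediately from Theorem \ref{thm:special gluing rule} applied to $V_1$ and $V_2$.
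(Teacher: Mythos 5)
Your proposal follows essentially the same route as the paper: cut $\Gamma_{X_p}$ along its compact edge, observe that with one brane per vertex only the two-legged framed vertex enters (so the Framed ADKMV Conjecture is available unconditionally from \cite{DZ}), apply the gluing part of Theorem \ref{thm:special gluing rule} to get a Bogoliubov transform, and match it to $Z_{X_p}$ via Lemma \ref{lem:prop. of the special glu. op.} and the gluing rule \eqref{eqn:gluing GV}, with the closed part supplying the $Z_{closed}$ normalization. The only cosmetic difference is that you discard the empty legs at the outset and work in $\cF_1\otimes\cF_a$ and $\cF_b\otimes\cF_3$ directly, whereas the paper keeps all four outer legs and then specializes; the content is identical.
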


The second example is the local $\mathbb{P}^2$, i.e., the
total space of the vector bundle $\mathcal{O}(-3)\rightarrow \mathbb{P}^2$.
We denote it by $X$. Its toric diagram $\Gamma_X$ contains three vertices
and a loop. So $\Gamma_X$ can be constructed by gluing two noncompact edges of
a toric diagram, say $\Gamma'$, with three vertices and no loops.
Let $X'$ be the toric Calabi-Yau threefold corresponding to $\Gamma'$.
Repeat the process as above, one can show that
there is a Bogoliubov transform $V'\in\cF_1\otimes\cF_2\otimes\cF_3\otimes\cF_a\otimes\cF_b$
whose projection on $\cF^{(0)}_1\otimes\cF^{(0)}_2\otimes\cF^{(0)}_3\otimes\cF^{(0)}_a\otimes\cF^{(0)}_b$
corresponds, under boson-fermion correspondence, to the generating function
of the open Gromov-Witten invariants of $X'$ with framings.
Here we label the noncompact edges of $\Gamma'$ by
$L_1, L_2, L_3, L_a$ and $L_b$ such that $\Gamma_X$ is constructed
form $\Gamma'$ by gluing $L_a$ and $L_b$.
Provided the framed ADKMV conjecture, we can define the normalized
self-gluing $G^f(V')$ of $V'$ with the special gluing vector $|P^f_{ab}\rangle$.
By Theorem \ref{thm:special gluing rule}, $G^f(V')$ is a Bogoliubov transform
in $\cF_1\otimes\cF_2\otimes\cF_3$.
Unlike the situation in the first example, the projection of $V$
on $\cF^{(0)}_1\otimes\cF^{(0)}_2\otimes\cF^{(0)}_3$, denoted by $V_0$, contains a new formal
variable $\Theta$ that doesn't appear in $Z_X$, the generating function of
the open Gromov-Witten invariants of $X$. On the other hand, also by Lemma \ref{lem:prop. of the special glu. op.}
and the gluing rule of the topological vertex given in \eqref{eqn:self-gluing GV}, we see
that the constant term (with respect to $\Theta$) corresponds to $Z_X$ under the boson-fermion correspondence.
In other words, for any partitions $\mu_i$, $i=1,2,3$, the open Gromov-Witten invariants
$Z_{\mu_1,\mu_2,\mu_3}$ of $X$ with boundary conditions given by $\mu_1, \mu_2$ and $\mu_3$ is given by
$$Z_{\mu_1,\mu_2,\mu_3}=\frac{1}{2\pi i}\oint \frac{\langle\mu_1|\otimes\langle\mu_2|\otimes\langle\mu_3|V\rangle}{\Theta}.$$

\subsection{Fermionic gluing principle of the topological vertex - general cases}
We start by assuming the framed ADKMV conjecture is true, and
let $V^{(a, b,c)}$ $(a,b,c\in\mathbb{Z})$ be the vectors defined as in the first paragraph of the above subsection.

Let $X$ be a toric Calabi-Yau threefold whose toric diagram is $\Gamma_X$.
Assume $\Gamma_X$ has $g$ loops and $h$ vertices and $n$ noncompact edges $L_1, \cdots, L_n$.
Let  $\tilde{Z}$ be the generating function of
the Gromov-Witten  invariants of $X$ with framings $a_1, \cdots, a_n$ on $L_1, \cdots, L_n$ respectively.
Denote by $Z_{closed}$ the generating function of the closed Gromov-Witten invariants of $X$.
Then $Z_X = \tilde{Z}/Z_{closed}$ is the generating function of the open Gromov-Witten invariants of $X$.
As formulated in \S \ref{subsec:topological vertex}, $Z_X$ is a vector in $\Lambda^{\otimes n}$ with parameters, where
$\Lambda$ is the space of symmetric functions.
By the boson-fermion correspondence (multi-component version of \eqref{boson-fermion}),
$Z_X$ corresponds to a vector, say $V_X$, in $\cF^{(0)}_1\otimes\cdots\otimes\cF^{(0)}_n$.

There are two possible cases that are similar to those described in \S \ref{subsec:topological vertex}:

$Case\ 1.$ There exists a  toric Calabi-Yau threefold $X'$ with toric diagram $\Gamma_{X'}$
such that $\Gamma_X$ can be constructed by gluing $\Gamma_{X'}$ and the trivalent vertex along a noncompact edge.
Then $\Gamma_{X'}$ has $g$ loops and $h-1$ vertices.
Let $L_1,\cdots ,L_{n-2},L_a$ be the noncompact edges of $\Gamma_{X'}$,
and let $Z_{X'}$ be the generating function of the open Gromov-Witten invariants of $X'$,
with framing $a_1,\cdots, a_{n-2}$ on $L_1, \cdots, L_{n-2}$ respectively and with canonical
framing on $L_a$.
Assume that there is a Bogoliubov transform $V_{X'}\in\cF_1\otimes\cdots\otimes\cF_{n-2}\otimes\cF_a$
whose projection $V_{X',0}$ on $\cF^{(0)}_1\otimes\cdots\otimes\cF^{(0)}_{n-2}\otimes\cF^{(0)}_a$
is a Laurent polynomial of $g$ formal variables $\Theta_1, \cdots, \Theta_g$.
Assume the constant term (with respect to $\Theta_1, \cdots, \Theta_g$) of $V_{X',0}$ gives
the image of $Z_{X'}$ under the boson-fermion correspondence.
Provided the framed ADKMV conjecture,
we can define the normalized gluing
$$V'= G^f(V_{X'} , V^{(0,a_{n-1},a_n)})$$
of $V_{X'}$ and $V^{(0,a_{n-1},a_n)}$
with the special gluing vector $|P^f_{ab}\rangle$ (see \S \ref{subsec:special gluing}),
where $f$ is an integer, and we label the edges of the trivalent vertex by $L_b, L_{n-1}, L_n$.
By Theorem \ref{thm:special gluing rule}, $V'$ is a Bogoliubov transform
in $\cF_1\otimes\cdots\otimes\cF_{n}$.
By Lemma \ref{lem:prop. of the special glu. op.} and the gluing procedure as shown in \eqref{eqn:gluing GV},
if $\Gamma_X$ can be constructed by gluing $\Gamma_{X'}$ and the trivalent vertex along the
noncompact edges $L_a$ and $L_b$,
then, for a suitable choice of $f$, the constant term (with respect to $\Theta_1, \cdots, \Theta_g$)
of the projection of $V'$ on $\cF^{(0)}_1\otimes\cdots\otimes\cF^{(0)}_n$
coincides with $V_X$.

$Case\ 2.$ There exists a toric Calabi-Yau threefold $X''$ whose toric diagram $\Gamma_{X''}$
has $n+2$ noncompact edges, say $L_1,\cdots ,L_{n},L_a,L_b$, such that
$\Gamma_X$ can be constructed from $\Gamma_{X''}$ by gluing $L_a$ and $L_b$.
Then $\Gamma_{X''}$ has $g-1$ loops and $h$ vertices.
Let $Z_{X''}$ be the generating function of the open Gromov-Witten invariants of $X''$,
with framings $a_1,\cdots a_{n}$ on $L_1, \cdots, L_{n}$ respectively and with canonical
framing on $L_a$ and $L_b$.
Assume that there is a Bogoliubov transform $V_{X''}\in\cF_1\otimes\cdots\otimes\cF_{n}\otimes\cF_a\otimes\cF_b$
whose projection $V_{X'',0}$ to $\cF^{(0)}_1\otimes\cdots\otimes\cF^{(0)}_{n}\otimes\cF^{(0)}_a\otimes\cF_b^{(0)}$
is a Laurent polynomial of $g-1$ formal variables $\Theta_1, \cdots, \Theta_{g-1}$.
Assume the constant term (with respect to $\Theta_1, \cdots, \Theta_{g-1}$) of $V_{X'',0}$ gives
the image of $Z_{X''}$ under the boson-fermion correspondence.
Provided the framed ADKMV conjecture, we can define the normalized self-gluing
$$V''= G^f(V_{X''})$$
of $V_{X''}$ with the special gluing vector $\langle P^f_{ab}|$,
(see \S \ref{subsec:special gluing}),
where $f$ is an integer.
By Theorem \ref{thm:special gluing rule}, $V''$ is a Bogoliubov transform in $\cF_1\otimes\cdots\otimes\cF_n$.
Note that $V''$ contains a new formal variable, say $\Theta_g$,
that still appears in the projection $V''_0$ of $V''$ on $\cF_1^{(0)}\otimes\cdots\otimes\cF_n^{(0)}$.
By Lemma \ref{lem:prop. of the special glu. op.} and the gluing procedure as shown in \eqref{eqn:self-gluing GV},
there exists a suitable integer $f$ such that
the constant term (with respect to $\Theta_1,\cdots, \Theta_g$) of
the projection of $V''$ to $\cF^{(0)}_1\otimes\cdots\otimes\cF^{(0)}_n$
gives $V_X$.

By the gluing procedure of the topological vertex as shown in \S\ref{subsec:topological vertex},
the above discussion implies:
\begin{thm}\label{thm:ferm. glu. of tv}
Let $X$ be a toric Calabi-Yau threefold with $n$ outer branes.
 Assume the generating function of the open Gromov-Witten invariants of $X$ is
$$Z(\bx^1, \cdots, \bx^n) = \sum_{\mu^1, \cdots, \mu^n}Z_{\mu^1 \cdots \mu^n} s_{\mu^1}(\bx^1)\cdots s_{\mu^n}(\bx^n).$$
Provided the framed ADKMV conjecture, we have:\\
1). if the toric diagram of $X$ has no loops,
then there is a Bogoliubov transform $V\in\cF_1\otimes\cdots\otimes\cF_{n}$ such that
$$Z_{\mu^1 \cdots \mu^n} = \langle\mu^1|\otimes \cdots\otimes \langle\mu^n|V\rangle$$
for all partitions $\mu^1, \cdots, \mu^n$; and\\
2). if the toric diagram of $X$ has $g$ loops, then there is a Bogoliubov transform $V\in\cF_1\otimes\cdots\otimes\cF_{n}$,
which is a Laurent series of $g$ formal parameters $\Theta_1,\cdots,\Theta_g$, such that
$$Z_{\mu^1 \cdots \mu^n} = \frac{1}{(2\pi i)^g}\oint \frac{\langle\mu^1|\otimes \cdots\otimes \langle\mu^n|V\rangle}{\Theta_1\cdots\Theta_g}d\Theta_1\cdots d\Theta_g$$
for all partitions $\mu^1, \cdots, \mu^n$.
\end{thm}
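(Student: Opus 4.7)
The plan is to prove the theorem by double induction on the pair $(h,g)$, where $h$ is the number of vertices and $g$ is the number of loops of the toric diagram $\Gamma_X$, following the inductive construction of toric diagrams reviewed in \S \ref{subsec:topological vertex}. The base case $(h,g)=(1,0)$ is $X=\mathbb{C}^3$ itself; here the framed ADKMV conjecture directly provides a Bogoliubov transform $V^{(a_1,a_2,a_3)}\in \cF_1\otimes\cF_2\otimes\cF_3$ whose charge-zero projection corresponds, under the boson-fermion correspondence, to the generating function in \eqref{eqn:framed TV}. No loop variables are present, so statement (1) holds.

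For the inductive step in Case 1, suppose $\Gamma_X$ is obtained by attaching a trivalent vertex to a smaller diagram $\Gamma_{X'}$ (with the same $g$ but $h-1$ vertices) along a noncompact edge. The inductive hypothesis provides a Bogoliubov transform $V_{X'}\in \cF_1\otimes\cdots\otimes\cF_{n-2}\otimes\cF_a$ whose charge-zero projection, after constant-term extraction in the $g$ loop parameters $\Theta_1,\ldots,\Theta_g$ (if any), represents $Z_{X'}$. The framed ADKMV conjecture supplies a Bogoliubov transform $V^{(0,a_{n-1},a_n)} \in \cF_b\otimes\cF_{n-1}\otimes\cF_n$ for the new trivalent vertex with canonical framing on $L_b$. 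I would form the normalized gluing $V = G^f(V_{X'},V^{(0,a_{n-1},a_n)})$ with the special gluing vector $|P^f_{ab}\rangle$, choosing $f$ so that the framing on $L_b$ is compatible with the canonical framing on $L_a$ as prescribed by \eqref{eqn:gluing GV}. Theorem \ref{thm:special gluing rule}(2) guarantees that $V$ is a Bogoliubov transform in $\cF_1\otimes\cdots\otimes\cF_n$, and Lemma \ref{lem:prop. of the special glu. op.} combined with \eqref{eqn:gluing GV} identifies, after the same $\Theta$-contour extraction, the matrix elements $\langle\mu^1|\otimes\cdots\otimes\langle\mu^n|V\rangle$ with the coefficients $Z_{\mu^1\cdots\mu^n}$.

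For the inductive step in Case 2, suppose instead $\Gamma_X$ is obtained from $\Gamma_{X''}$ (with $g-1$ loops and the same $h$) by identifying two noncompact edges $L_a$ and $L_b$. By induction, a Bogoliubov transform $V_{X''}\in \cF_1\otimes\cdots\otimes\cF_n\otimes\cF_a\otimes\cF_b$ represents $Z_{X''}$ through constant-term extraction in the $g-1$ parameters $\Theta_1,\ldots,\Theta_{g-1}$. Form the normalized self-gluing $V=G^f(V_{X''})$ with $|P^f_{ab}\rangle$, the integer $f$ chosen to be compatible with the canonical framings on $L_a$ and $L_b$. Theorem \ref{thm:special gluing rule}(1) shows that $V$ is a Bogoliubov transform in $\cF_1\otimes\cdots\otimes\cF_n$, and it acquires a new formal parameter $\Theta_g$ inherited from the gluing vector. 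Combining Lemma \ref{lem:prop. of the special glu. op.} with \eqref{eqn:self-gluing GV}, the constant term in $\Theta_1,\ldots,\Theta_g$ of the charge-zero projection of $V$ reproduces $Z_X$, yielding statement (2) with the $g$-fold contour integral representation.

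The conceptual content of the argument is already packaged in Theorems \ref{thm:self-gluing rule} and \ref{thm:gluing rule}; granted those, what remains is essentially bookkeeping. The step I expect to require the most care is matching all the combinatorial factors: verifying that the sign $(-1)^{(f+1)|\mu|}$, the framing twist $q^{\pm f\kappa_\mu/2}$, and the K\"ahler weight $Q^{|\mu|}$ produced by Lemma \ref{lem:prop. of the special glu. op.} line up exactly with those appearing in \eqref{eqn:gluing GV} and \eqref{eqn:self-gluing GV} for each choice of framing $f$, and confirming that the $\Theta$-parameters introduced successively in Case 2 steps survive independently in the final expression so that a genuine $g$-fold contour integral is needed to recover $Z_X$.
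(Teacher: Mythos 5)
Your proposal follows essentially the same route as the paper: an induction on the number of vertices and loops of the toric diagram, with the framed ADKMV conjecture supplying the base case, Theorems \ref{thm:self-gluing rule} and \ref{thm:gluing rule} (via Theorem \ref{thm:special gluing rule}) guaranteeing the Bogoliubov property at each step, and Lemma \ref{lem:prop. of the special glu. op.} matched against \eqref{eqn:gluing GV} and \eqref{eqn:self-gluing GV} to identify the matrix elements, including the contour extraction in the loop parameters $\Theta_1,\cdots,\Theta_g$. The bookkeeping of signs and framing factors you flag is exactly the verification the paper relies on, so the argument is correct and coincides with the paper's.
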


We refer to Theorem \ref{thm:ferm. glu. of tv} as the fermionic gluing principle of the topological vertex.

\begin{rem}
Let $V\in \cF_1\otimes\cdots\otimes\cF_{n}$ be given by 2) in Theorem \ref{thm:ferm. glu. of tv}.
Let $$Z(\Theta_1,\cdots,\Theta_g)=\sum_{\mu^1, \cdots, \mu^n}\langle\mu^1|\otimes \cdots\otimes \langle\mu^n|V\rangle s_{\mu^1}(\bx^1)\cdots s_{\mu^n}(\bx^n),$$
which is a Laurent series of  formal parameters $\Theta_1,\cdots,\Theta_g$ with coefficients in $\Lambda^{\otimes n}$.
We can view $Z(\Theta_1,\cdots,\Theta_g)$ as an extended topological string partition function of $X$.
Expanding $Z(\Theta_1,\cdots,\Theta_g)$ as
$$Z(\Theta_1,\cdots,\Theta_g)=\sum_{N_1,\cdots,N_g\in \mathbb{Z}}Z_{N_1,\cdots,N_g}\Theta^{N_1}_1\cdots \Theta^{N_g}_g,$$
we have seen that the constant term $Z_{0,\cdots, 0}$ gives the generating function of the open Gromov-Witten invariants of $X$,
the toric Calabi-Yau space considered in 2) in Theorem \ref{thm:ferm. glu. of tv}.
It is natural to consider the geometric meaning of other terms.
It was suggested in \cite{ADKMV} and \cite{ACDKV} from the B-theory viewpoint that $Z_{N_1,\cdots, N_g}$
give the partition functions of the Kodaira-Spencer fields on the mirror curve of $X$ with certain
monodromy datum represented by $(N_1,\cdots,N_g)$.
Surprisingly, it was also argued in the same papers that the additional sectors $Z_{N_1,\cdots, N_g}$ contain no more
information than $Z_{0,\cdots, 0}$.
Moreover, a formula that explicitly expresses  $Z_{N_1,\cdots, N_g}$ in term of $Z_{0,\cdots, 0}$
was also proposed, which seems to be another interesting conjecture that asks for further study.
\end{rem}

\section{Proof of the self-gluing rule for Bogoliubov transforms}\label{sec:proof of self-gluing rule}
The aim of this section is to prove  Theorem \ref{thm:self-gluing rule}.
To make the key idea clear and concrete,
we first consider a simple case of this theorem with full details in \S \ref{subsec:self glu. simple},
and then  sketch the proof of  general cases in \S \ref{subsec:self glu. general}.

\subsection{Simple case}\label{subsec:self glu. simple}
Let
\begin{equation}
V_1 = \exp(\sum_{i,j = 1,2}\sum_{m , n \geq 0}A^{ij}_{mn}\psi^i_\bm\psi^{j*}_{-\bn})|0_{12}\rangle
\end{equation}
be a two-component Bogoliubov transform in $\mathcal{F}_1\otimes \mathcal{F}_2$ and
\begin{equation}
V_2 = \exp(\sum_{m , n \geq 0}A_{mn}\psi^2_\bm\psi^{2*}_{-\bn})|0_{2}\rangle
\end{equation}
be an one-component Bogoliubov transform in $\mathcal{F}_2$, where $A^{ij}_{mn}$ and $A_{mn}$ are arbitrary coefficients maybe with parameters.

Define
\begin{equation}
\tilde{V_2} = \exp(\sum_{m , n \geq 0}Q^{m+n+1}A_{mn}\psi^2_\bm\psi^{2*}_{-\bn})|0_{2}\rangle,
\end{equation}
where $Q$ is a formal variable and $\tilde{V_2}$ is viewed as a formal power series of $Q$ with coefficients in $\mathcal{F}_2$.

 The following inner product
\begin{equation}
\begin{split}
V_0 &= \left(\exp(\sum_{m , n \geq 0}A^{22}_{mn}\psi^2_\bm\psi^{2*}_{-\bn})|0_{2}\rangle ,
 \exp(\sum_{m , n \geq 0}Q^{m+n+1}A_{mn}\psi^2_\bm\psi^{2*}_{-\bn})|0_{2}\rangle\right)\\
    &=\langle 0_2|\exp(\sum_{m , n \geq 0}Q^{m+n+1}A_{mn}\psi^2_{-\bn}\psi^{2*}_{\bm})
     \exp(\sum_{m , n \geq 0}A_{mn}^{22}\psi^2_\bm\psi^{2*}_{-\bn})|0_{2}\rangle
\end{split}
\end{equation}
is well defined as a formal power series of the formal variable $Q$.

Define $\tilde{V} = (V_1 , \tilde{V_2})$,
where the inner product is taken on the $\mathcal{F}_2$ component. Then $\tilde{V}$ is  a formal power series of $Q$ with coefficients in $\mathcal{F}_1$.

\begin{thm}\label{thm:gluing}
Let $\tilde{V}$ and $V_0$ be as above,
then the formal  power series $V = \tilde{V}/V_0$ of $Q$ with coefficients in $\mathcal{F}_1$
is a Bogoliubov transform of the fermionic vacuum $|0_1\rangle \in \mathcal{F}_1$,
 i.e., for  $m , n \geq 0$, there exist  formal power series $R_{mn}$ of $Q$, such that
 \be
 V = \exp(\sum_{m,n\geq 0}R_{mn}\psi^1_\bm\psi^{1*}_{-\bn})|0_{1}\rangle.
 \ee
\end{thm}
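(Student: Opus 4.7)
The plan is to avoid direct combinatorial manipulation of the exponentials by invoking the bilinear identity that characterizes elements of $\widehat{GL_\infty}|0\rangle$ (Proposition \ref{bilinear relation tau fermion} and Remark \ref{rem:Bogoliubov-KP}), in its multi-component form from \cite{KL}. Since $V_1$ is a two-component Bogoliubov transform and $\tilde{V}_2$ is a one-component Bogoliubov transform, they satisfy
\[
\sum_{i=1,2}\sum_{r\in\mathbb{Z}+\frac{1}{2}}\psi^i_r V_1\otimes\psi^{i*}_r V_1=0,\qquad \sum_{r\in\mathbb{Z}+\frac{1}{2}}\psi^2_r\tilde{V}_2\otimes\psi^{2*}_r\tilde{V}_2=0,
\]
the first identity in $(\cF_1\otimes\cF_2)^{\otimes 2}$ and the second in $\cF_2^{\otimes 2}$. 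The goal is to deduce the one-component bilinear identity $\sum_r\psi^1_r V\otimes\psi^{1*}_r V=0$ in $\cF_1^{\otimes 2}$, and then to apply Remark \ref{rem:Bogoliubov-KP} to conclude that $V=\tilde{V}/V_0$ is a Bogoliubov transform of $|0_1\rangle$.

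To carry this out, I would pair the first identity with $\tilde{V}_2\otimes\tilde{V}_2$ on the two $\cF_2$ subfactors. The $i=1$ summands, because $\psi^1_r$ commutes with integration over $\cF_2$, reduce to $V_0^2\sum_r\psi^1_r V\otimes\psi^{1*}_r V$. For the $i=2$ summands, adjointness of $\psi^2_r$ and $\psi^{2*}_r$ transfers them onto $\tilde{V}_2$, producing $\sum_r(\psi^{2*}_r\tilde{V}_2,V_1)\otimes(\psi^2_r\tilde{V}_2,V_1)$, which (up to swapping the two tensor factors) is exactly the second bilinear identity paired against $V_1\otimes V_1$ over $\cF_2\otimes\cF_2$. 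Hence the $i=2$ contribution vanishes, leaving the desired identity after dividing by the nonzero formal series $V_0$, whose constant term in $Q$ equals $\langle 0_2|0_2\rangle=1$.

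The charge-zero condition $V\in\cF_1^{(0)}$ follows because $\tilde{V}_2\in\cF_2^{(0)}$ picks out the $\cF_2^{(0)}$-component of $V_1$ and $V_1$ has total charge zero, so the paired state automatically lies in $\cF_1^{(0)}$. With the bilinear identity in hand, Remark \ref{rem:Bogoliubov-KP} gives $V\in\widehat{GL_\infty}|0_1\rangle$; a standard Gauss-decomposition argument then rewrites any such element as $\exp\bigl(\sum_{m,n\ge 0}R_{mn}\psi^1_{\bm}\psi^{1*}_{-\bn}\bigr)|0_1\rangle$ up to a scalar, which is absorbed into the normalization against $V_0$.

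The main obstacle is the sign bookkeeping in the cancellation step: the Koszul convention \eqref{eqn:sign convention} for swapping fermionic operators across tensor factors, together with the adjoint convention $(\psi^i_r)^*=\psi^{i*}_r$, must be tracked consistently so that the $i=2$ contribution cancels exactly rather than producing a stray sign. A secondary but routine point is justifying all operations order-by-order as formal power series in $Q$, so that the inner products and infinite sums over $r$ are well-defined and division by $V_0$ is legitimate.
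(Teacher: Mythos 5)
Your argument is correct in outline but follows a genuinely different route from the paper. The paper's proof is direct and constructive: it writes $\tilde{V}$ as a product of exponentials of quadratic fermion operators, repeatedly applies the commutator identity $e^Ae^B=e^{[A,B]}e^Be^A$ to push annihilators to the right, and uses the fact that each newly generated quadratic term gains a factor of $Q$ to take a $Q$-adic limit, producing the coefficients $R_{mn}$ explicitly as convergent formal power series. You instead characterize Bogoliubov transforms via the Hirota bilinear identities, contract the two-component identity for $V_1$ against $\tilde{V}_2\otimes\tilde{V}_2$, cancel the $i=2$ terms using the identity for $\tilde{V}_2$, and then invoke a Gauss (Giambelli-type) decomposition. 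Your approach is shorter and more conceptual, and notably it would, if completed, bear directly on the paper's own Remark 4.1 about gluing general tau functions, since it never uses the explicit exponential form of $V_1$ beyond its membership in the $\widehat{GL_\infty}$-orbit; the paper's approach, by contrast, is self-contained (no appeal to the orbit characterization or to Gauss decomposition over a formal power series ring) and is what actually generalizes in \S 6 to the multi-component self-gluing, where the conclusion is a multi-component Bogoliubov transform and the corresponding orbit/decomposition theory is more delicate.

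Two points in your sketch need to be nailed down. First, the sign check you defer does work out: with $\psi^1_r=\psi_r\otimes 1$ and $\psi^2_r=(-1)^{F_1}\otimes\psi_r$, the partial pairing gives $(\psi^2_rV_1,\tilde{V}_2)_{\cF_2}=-(V_1,\psi^{*}_r\tilde{V}_2)_{\cF_2}$ and $(\psi^{2*}_rV_1,\tilde{V}_2)_{\cF_2}=-(V_1,\psi_r\tilde{V}_2)_{\cF_2}$ (the contributing components of $V_1$ have odd $\cF_1$-charge), so the two minus signs cancel and the $i=2$ block is the pairing of $V_1\otimes V_1$ against the swapped bilinear identity for $\tilde{V}_2$, hence zero. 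Second, and more importantly, passing from ``$V$ lies in the $\widehat{GL_\infty}$-orbit'' to the stated form $\exp(\sum_{m,n\geq 0}R_{mn}\psi^1_{\bm}\psi^{1*}_{-\bn})\vac$ with no scalar prefactor requires $\langle 0_1|V\rangle=1$; orbit elements with vanishing vacuum component (e.g.\ $s_{(1)}$) are not of this form. Here this holds because $\langle 0_1|V_1=\exp(\cA^{22})|0_2\rangle$ (all terms of $V_1$ involving $\psi^1$-creators are killed by $\langle 0_1|$), whence $\langle 0_1|\tilde{V}\rangle=V_0$ exactly --- this short computation is what justifies your phrase ``absorbed into the normalization'' and explains why $V_0$ is precisely the right normalizing factor, so it should be made explicit.
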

From the proof of this theorem, we will see why $V_0$ appear naturally as a factor of $\tilde{V}$.
The following lemma, which is well-known in Lie theory,
will be used in the proof.

\begin{lem} \label{lem:exponential cr}
(See e.g \cite{Hall}) Let $A$ and $B$ be two linear operators on a
vector space $H$. Assume both $e^A$ and $e^B$ make sense. If the
commutator $[A , B] = AB - BA$ commutes with both $A$ and $B$. Then
\be e^A e^B = e^{[A , B]}e^B e^A. \ee
\end{lem}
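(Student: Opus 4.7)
The plan is to reduce the identity to a conjugation formula and then invoke the centrality of $C := [A,B]$ in the subalgebra generated by $A$ and $B$. The three steps are: (i) prove $e^A B e^{-A} = B + C$; (ii) exponentiate to obtain $e^A e^B e^{-A} = e^{B + C}$; (iii) split $e^{B+C} = e^C e^B$ using $[B,C] = 0$ and rearrange.

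For step (i), I would consider the operator-valued function $F(t) := e^{tA} B e^{-tA}$. Differentiation yields $F'(t) = [A, F(t)]$ with $F(0) = B$, so by induction $F^{(k)}(0) = \operatorname{ad}_A^k(B)$, where $\operatorname{ad}_A(X) := [A, X]$. The hypothesis $[A, C] = 0$ forces $\operatorname{ad}_A^k(B) = 0$ for all $k \geq 2$, so the Taylor expansion of $F$ truncates to $F(t) = B + tC$; setting $t = 1$ gives the conjugation formula.

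For step (ii), I would apply the general identity $e^X e^Y e^{-X} = \exp(e^X Y e^{-X})$, which is valid whenever the formal exponential series involved make sense and follows from comparing the two sides termwise in the expansion of $e^Y$. Plugging in $X = A$, $Y = B$ and substituting the formula from step (i) yields $e^A e^B e^{-A} = \exp(B + C)$. For step (iii), the centrality $[B, C] = 0$ permits the factorisation $\exp(B + C) = e^C e^B$ by the trivial commuting case of Baker--Campbell--Hausdorff. Multiplying on the right by $e^A$ then gives $e^A e^B = e^C e^B e^A = e^{[A,B]} e^B e^A$, as claimed.

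The only real subtlety is making sense of the various exponentials, and in particular the identity $e^X e^Y e^{-X} = \exp(e^X Y e^{-X})$, in whatever operator-theoretic setting is required. In the applications of this paper, $A$ and $B$ are bilinear expressions in fermionic creators and annihilators whose exponentials act as well-defined formal power series on Fock space, so every step above is a purely algebraic manipulation with no convergence issues to address; the reference to \cite{Hall} covers the case where $H$ is finite-dimensional or a Banach space and one wants genuine analytic convergence.
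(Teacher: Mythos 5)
Your proof is correct. Note that the paper does not actually prove this lemma --- it is stated with only a citation to \cite{Hall} --- so there is no in-paper argument to compare against; your three-step derivation (conjugation formula $e^A B e^{-A} = B + [A,B]$ from the vanishing of $\operatorname{ad}_A^k(B)$ for $k\ge 2$, exponentiation via $e^X e^Y e^{-X} = \exp(e^X Y e^{-X})$, and the commuting-case factorisation $e^{B+C}=e^C e^B$) is exactly the standard proof one finds in the cited reference, and your closing remark correctly identifies the only point needing care in this paper's setting, namely that the exponentials of quadratic fermionic expressions are to be read as formal power series so that all manipulations are purely algebraic.
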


\begin{proof}
(Proof of Theorem \ref{thm:gluing}) Recall that
\begin{equation*}
\tilde{V}=(\exp(\sum_{i,j = 1,2}\sum_{m , n \geq 0}A^{ij}_{mn}\psi^i_\bm\psi^{j*}_{-\bn})|0_{12}\rangle, \exp(\sum_{m , n \geq 0}Q^{m+n+1}A_{mn}\psi^2_\bm\psi^{2*}_{-\bn})|0_{2}\rangle).
\end{equation*}
To make the notations simpler,
let
\bea
&& \cA^{ij} = \sum_{m , n \geq 0}A^{ij}_{mn}\psi^i_{\bm}\psi^{j*}_{-\bn}, \\
&& \cA = \sum_{m , n \geq 0}Q^{m+n+1}A_{mn} \psi^2_{\bm}\psi^{2*}_{-\bn}.
\eea
Then one can rewrite $\tilde{V}$ as follows:
\be
\tilde{V} = \exp{\cA^{11}} \langle0_2|\exp(\cA^*) \exp (\cA^{21}) \exp (\cA^{12}) \exp (\cA^{22})
|0_2\rangle |0_1\rangle.
\ee
As usual,
our strategy here is to move the annihilators to the right using the
anticommutation relations \eqref{eqn:CR}.
By \eqref{eqn:CR} and \eqref{eqn:sign convention}, one has
\be
[\cA^*, \cA^{21}]
= \cB^{21}=\sum_{m, n \geq 0} B^{21}_{mn} \psi^{2}_{-\bm}\psi^{1*}_{-\bn},
\ee
where
\be
B^{21}_{mn} = \sum_{r \geq 0}Q^{r+m+1}A_{rm}A^{21}_{rn}
\ee
are formal power series of $Q$ which are divisible by $Q$.
Note that the right-hand side  commutes with both
$\cA^*$ and
$\cA^{21}$,
hence by Lemma \ref{lem:exponential cr} we get
\ben
&& \exp(\cA^*) \exp(\cA^{21})
= \exp(\cA^{21}) \exp(\cA^*) \exp(\cB^{21}).
\een
By the same method, one can show that
\be
\exp(\cB^{21})\exp(\cA^{12})
=  \exp(\cA^{2112,1}) \exp(\cA^{12}) \exp(\cB^{21}),
\ee
where
\be
\cA^{2112,1} = [\cB^{21}, \cA^{12}]
= - \sum_{m,n \geq 0} \sum_{r\geq 0} A^{12}_{mr}B^{21}_{rn}\psi^1_m\psi^{1*}_{-n},
\ee
and
\be
\exp(\cB^{21})\exp(\cA^{22})
=  \exp(\cA^{21,1}) \exp(\cA^{22}) \exp(\cB^{21}),
\ee
where
\be
\cA^{21,1} = [\cB^{21}, \cA^{22}]
=  - \sum_{m,n \geq 0}\sum_{r\geq 0} A^{22}_{mr}B^{21}_{rn}\psi^2_m\psi^{1*}_{-n}.
\ee
Now we have
\ben
\tilde{V} = \exp(\cA^{11} + \cA^{2112,1}) \langle 0_2| \exp(\cA^{21}) \exp(\cA^*)  \exp(\cA^{12})\exp(\cA^{22}) \exp(\cA^{21,1}) \exp(\cB^{21})|0_{2}\rangle |0_{1}\rangle.
\een
Note $\cB^{21}|0_2\rangle = 0$ and $\langle 0_2| \cA^{21} = 0$,
we have
\be
\tilde{V} = \exp(\cA^{11} + \cA^{2112,1}) \langle 0_2|
\exp(\cA^*)  \exp(\cA^{12})\exp(\cA^{22}) \exp(\cA^{21,1})
|0_{2}\rangle |0_{1}\rangle.
\ee
Similarly, one can show that
\be
\exp(\cA^*) \exp(\cA^{12})
=  \exp(\cA^{12}) \exp(\cA^*)\exp(\cB^{12}),
\ee
where
\ben
\cB^{12} = [\cA^*, \cA^{12}]
= \sum_{m , n \geq 0}B^{12}_{mn}\psi^1_\bm \psi_\bn^{2*}
\een
which commutes with both $\cA^*$ and $\cA^{12}$, where
\be
B^{12}_{mn} = -\sum_{r\geq 0} A^{12}_{mr}Q^{n+r+1}A_{nr}
\ee
are formal power series of $Q$ which are divisible by $Q$.

\be
\exp(\cB^{12}) \exp(\cA^{22}) = \exp(\cA^{22}) \exp (\cA^{12,1}) \exp(\cB^{12}),
\ee
where
\be
\cA^{12,1} = [\cB^{12}, \cA^{22}]
= \sum_{m,n \geq 0} \sum_{r \geq 0} B^{12}_{mr} A^{22}_{rn} \psi_\bm^1 \psi^{2*}_{-\bn},
\ee
and
\be
\exp(\cB^{12}) \exp(\cA^{21,1})
= \exp(\cA^{1221,1})\exp(\cA^{21,1})\exp(\cB^{12}),
\ee
where
\be
\cA^{1221,1} = [\cB^{12}, \cA^{21,1}] = \sum_{m,n\geq 0}( \sum_{r\geq 0}B^{12}_{mr}A^{21,1}_{rn})\psi^1_{\bm}\psi^{1*}_{-\bn}
\ee
commutes with both $\cB^{12}$ and $\cA^{21,1}$.
Because $\langle 0_2| \cA^{12} = 0$ and $\cB^{12}|0_2\rangle=0$,
\ben
&&  \langle 0_2| \exp(\cA^*)  \exp(\cA^{12})\exp(\cA^{22}) \exp(\cA^{21,1}) |0_{2}\rangle  \\
& = &  \langle 0_2| \exp(\cA^{12})\exp(\cA^*)  \exp(\cB^{12})
\exp(\cA^{22}) \exp(\cA^{21,1})|0_{2}\rangle  \\
&  = & \langle 0_2|
\exp(\cA^*)  \exp(\cA^{22}) \exp(\cA^{12,1}) \exp(\cA^{1221,1})\exp(\cA^{21,1})\exp(\cB^{12})
|0_{2}\rangle  \\
&  = & \exp(\cA^{1221,1}) \langle 0_2|
\exp(\cA^*)  \exp(\cA^{22}) \exp(\cA^{12,1})\exp(\cA^{21,1})
|0_{2}\rangle.
\een
Because the operators $\cA^{22}, \cA^{12,1}$ and $\cA^{21,1}$ commute with each other,
we now have
\ben
\tilde{V} =
\exp(\cA^{11}+\cA^{2112,1}+\cA^{1221,1})
\langle 0_2|\exp(\cA^{*})\exp(\cA^{21,1})\exp(\cA^{12,1})\exp(\cA^{22}) |0_2\rangle|0_1\rangle.
\een
Recall $\cA^{2122,1}, \cA^{12,1}, \cA^{21,1}$ are divisible by $Q$,
and $\cA^{1221,1}$ is divisible by $Q^2$.
By repeating the above procedure $N$-times one gets:
\ben
\tilde{V} & = &
\exp(\cA^{11}+\sum_{j=1}^N (\cA^{2112,j}+\cA^{1221,j}) \\
&& \langle 0_2|\exp(\cA^{*})\exp(\cA^{21,N})\exp(\cA^{12,N})\exp(\cA^{22})) |0_2\rangle|0_1\rangle,
\een
where $\cA^{2121,j}, \cA^{12,j}, \cA^{21,j}$
and $\cA^{1221,j}$ is divisible by $Q^j$.
Therefore,
by taking $N \to \infty$,
\be
\tilde{V} = \langle 0_2|\exp(\cA^{*}) \exp(\cA^{22}) |0_2\rangle \cdot
\exp(\cA^{11}+\sum_{j=1}^\infty (\cA^{2112,j}+\cA^{1221,j}) |0_1\rangle.
\ee
This completes the proof of Theorem 4.1.
\end{proof}

\subsection{Sketch of the proof of Theorem \ref{thm:self-gluing rule}}\label{subsec:self glu. general}
The proof of Theorem \ref{thm:self-gluing rule} is essentially same as
that of Theorem \ref{thm:gluing}, but with much more complicated calculations.
So we just sketch it here.

\begin{proof}$(Sketch\ of \ the\ Proof\ of\ theorem\ \ref{thm:self-gluing rule})$
Recall that the gluing vector $P^\bE_{ab}$ is defined in \eqref{eqn:the gluing operator}. We write
$$V = \exp\left(\sum_{i,j\in\{a, b, 1, 2, \cdots , M\}}\sum_{m , n \geq 0}A^{ij}_{mn}\psi^i_\bm\psi^{j*}_{-\bn}\right)|0\rangle.$$
The self-gluing $\tilde{G}^\bE(V)$ is given by \eqref{eqn:self-gluing}. For $r\in\mathbb{Z}+1/2$, the adjoint operator of $\psi^{i}_{r}$ is
$\psi^{i*}_{r}$. Let
$$\cA_{ij}=\sum_{m,n\geq 0}A^{ij}_{mn}\psi^i_\bm\psi^{j*}_{-\bn},\ \ i, j\in\{1, \cdots, n, a, b\},$$
$$\cB_{ij}=\sum_{m,n\geq 0}B^{ij}_{mn}\psi^i_\bm\psi^{j*}_{-\bn},\  \ \ i, j = a, b,$$
where $B^{ij}_{mn} = Q^{m+n+1}\Theta^{\delta^{ab}_{ij}}E^{ij}_{mn}$ for $i , j = a , b$. Then
\be
\begin{split}\label{eqn:self-gluing expansion}
\tilde{G}^\bE(V)=&\exp(\sum_{i,j=1}^M \cA_{ij})
             \langle 0|\exp(\cB^*_{bb})\exp(\cB^*_{ba})\exp(\cB^*_{ab})\\
             &\exp(\cB^*_{aa})\exp(\cA_{aa})\exp(\cA_{ab})\exp(\cA_{ba})\exp(\cA_{bb})\\
             & \prod_{i=1}^M \exp(\cA_{ai})\exp(\cA_{bi})
              \prod_{i=1}^M \exp(\cA_{ia})\exp(\cA_{ib})|0\rangle|0'\rangle,
\end{split}
\ee
where $|0\rangle$ is the vacuum in $\cF_a\otimes\cF_b$ and $|0'\rangle$ is the vacuum  in
 $\cF_1\otimes\cdots\otimes\cF_M$.

Note that $$\langle 0|\exp(\cA_{ip})=\langle 0|\exp(\cA_{pi})=\langle 0|$$ for $i= 1, \cdots, M,\ p=a,b,$
so we try to move these operators to the left.
We first consider $\exp(\cA_{ia})$.
By \eqref{eqn:CR} and \eqref{eqn:sign convention},
for a fixed $i$, $1\leq i \leq M$, we have
\be
[\cB^*_{aa} , \cA_{ia}] = \cS_{ia}=\sum_{m,n\geq 0}S^{ia}_{mn}\psi^i_\bm\psi^{a*}_\bn,
\ee
where
$$S^{ia}_{mn} = \sum_{r\geq 0}A^{ia}_{mr}B^{aa}_{nr}$$
are formal power series of $Q$, $\Theta$ and $\Theta^{-1}$.
It is important for our argument that the coefficients $S_{mn}$
are divisible by $Q$. It is clear that $\cS_{ia}$ commutes with
both $B^*_{aa}$ and $\cA_{ia}$.
By  Lemma \ref{lem:exponential cr}, we have
\be
\exp(\cB^*_{aa})\exp(\cA_{ia})) = \exp(\cA_{ia})\exp(\cB^*_{aa})\exp(\cS_{ia})).
\ee

Similarly, one can prove that
\be
\exp(\cB^*_{ba})\exp(\cA_{ia})) = \exp(\cA_{ia})\exp(\cB^*_{ba})\exp(\cS_{ib})),
\ee
where $$\cS_{ib}=\sum_{m,n\geq 0}S^{ib}_{mn}\psi^i_\bm\psi^{b*}_\bn$$ is an operator divisible by $Q$.

Note that $\langle 0|\psi^{a*}_{-\bn} = 0$ for all $n\geq 0$, we have
\be\label{eqn:step1 cr}
\begin{split}
 &\langle 0|\exp(\cB^*_{bb})\exp(\cB^*_{ba})\exp(\cB^*_{ab})\exp(\cB^*_{aa})\exp(\cA_{ia})\\
=&\langle 0|\exp(\cB^*_{bb})\exp(\cB^*_{ba})\exp(\cB^*_{ab})\exp(\cB^*_{aa})\exp(\cS_{ib})\exp(\cS_{ia}).
\end{split}
\ee
Now we get two new operators $\exp{\cS_{ia}}$ and $\exp{\cS_{ib}}$
that seems unrelated to the original representation of $\tilde{G}^E(V)$ in \eqref{eqn:self-gluing expansion}.
Note that $\exp{\cS_{ia}}|0\rangle=\exp{\cS_{ib}}|0\rangle=|0\rangle$,
we can get ride of these operators by moving them to the right.
By the same argument, one can show that
\be\label{eqn:step2 cr}
\begin{split}
 &\exp(\cS_{ib})\exp(\cS_{ia})\exp(\cA_{aa})\exp(\cA_{ab})\exp(\cA_{ba})\exp(\cA_{bb})\\
=&\exp(\cA_{aa})\exp(\cA_{ab})\exp(\cA_{ba})\exp(\cA_{bb})
  \exp(\cA'_{ia,1})\exp(\cA'_{ib,1})\exp(\cS_{ib})\exp(\cS_{ia}),
\end{split}
\ee
where
$$\cA'_{ia,1}=\sum_{mn\geq 0}A'^{ia,1}_{mn}\psi^i_\bm\psi^{a*}_{-\bn},\ \cA'_{ib,1}=\sum_{m,n\geq 0}A'^{ib,1}_{mn}\psi^i_\bm\psi^{b*}_{-\bn}$$
are certain operators divisible by $Q$. Finally we have the commutation relation
\be\label{eqn:step3 cr}
\begin{split}
 &\exp(\cS_{ib})\exp(\cS_{ia})\prod_{j=1}^M\exp(\cA_{aj})\exp(\cA_{bj})\\
=&\exp(\sum_{j=1}^M \cA'_{ij,1})\prod_{j=1}^M\exp(\cA_{aj})\exp(\cA_{bj})\exp(\cS_{ib})\exp(\cS_{ia}),
\end{split}
\ee
where, for $j= 1, \cdots, M$,
$$\cA'_{ij,1}=\sum_{m,n\geq 0}A'^{ij,1}_{mn}\psi^i_\bm\psi^{j*}_{-\bn}$$
are certain operators divisible by $Q$. By \eqref{eqn:step1 cr}, \eqref{eqn:step2 cr} , \eqref{eqn:step3 cr} and \eqref{eqn:self-gluing expansion}, we get
\be
\begin{split}
\tilde{G}^\bE(V)=&\exp(\sum_{i,j=1}^M(\cA_{ij}+\cA'_{ij,1}))
                 \langle 0|\exp(\cB^*_{bb})\exp(\cB^*_{ba})\exp(\cB^*_{ab})\exp(\cB^*_{aa})\\
                 &\exp(\cA_{aa})\exp(\cA_{ab})\exp(\cA_{ba})\exp(\cA_{bb})\prod_{i=1}^M \exp(\cA_{ai})\exp(\cA_{bi})\\
                 &\prod_{i=1}^M \exp(\cA_{ib})\exp(\sum_{i=1}^M\cA'_{ia,1})\exp(\sum_{i=1}^M\cA'_{ib,1})|0\rangle|0'\rangle.
\end{split}
\ee

Repeating the above process for the operator $\prod_{i=1}^M \exp(\cA_{ib})$,
we see that, for $i,j=1, \cdots, M$, there are operators $$\cA'_{ij,2}=\sum_{m,n\geq 0}A'^{ij,2}_{mn}\psi^i_\bm\psi^{j*}_{-\bn},$$
 $$\cA'_{ia,2}=\sum_{m,n\geq 0}A'^{ia,2}_{mn}\psi^i_\bm\psi^{a*}_{-\bn},$$
 $$\cA'_{ib,2}=\sum_{m,n\geq 0}A'^{ib,2}_{mn}\psi^i_\bm\psi^{b*}_{-\bn}$$
  divisible by $Q$ such that

\be\label{eqn:self-gluing expansion 2}
\begin{split}
&\tilde{G}^\bE(V)\\
=&\exp(\sum_{i,j=1}^M(\cA_{ij}+\cA'_{ij}))
                 \langle 0|\exp(\cB^*_{bb})\exp(\cB^*_{ba})\exp(\cB^*_{ab})\\
                 &\exp(\cB^*_{aa})\exp(\cA_{aa})\exp(\cA_{ab})\exp(\cA_{ba})\exp(\cA_{bb})\prod_{i=1}^M \exp(\cA_{ai})\exp(\cA_{bi})\\
                 &\exp(\sum_{i=1}^M\cA_{ia}(1))\exp(\sum_{i=1}^M\cA_{ib}(1))|0\rangle|0'\rangle,
\end{split}
\ee
where $\cA'_{ij}=\cA'_{ij,1}+\cA'_{ij,2}$, $\cA_{ia}(1)=\cA'_{ia,1}+\cA'_{ia,2}$, $\cA_{ib}(1)=\cA'_{ib,1}+\cA'_{ib,2}$.

We can carry out similar process for the operators $\prod_{i=1}^M \exp(\cA_{ai})$ and $\prod_{i=1}^M \exp(\cA_{bi})$
to show that, for $i=1, \cdots, M$,  there are operators
$$\cA''_{ij}=\sum_{m,n\geq 0}A''^{ij}_{mn}\psi^i_\bm\psi^{j*}_{-\bn},$$
$$\cA_{ai}(1)=\sum_{m,n\geq 0}A^{ai}_{mn}(1)\psi^a_\bm\psi^{i*}_{-\bn},$$
$$\cA_{bi}(1)=\sum_{m,n\geq 0}A^{bi}_{mn}(1)\psi^a_\bm\psi^{i*}_{-\bn}$$
divisible by $Q$ such that

\be
\begin{split}\label{eqn:self-gluing expansion 3}
\tilde{G}^\bE(V)=&\exp(\sum_{i,j=1}^M (\cA_{ij}+\cA_{ij}(1)))
             \langle 0|\exp(\cB^*_{bb})\exp(\cB^*_{ba})\exp(\cB^*_{ab})\\
             &\exp(\cB^*_{aa})\exp(\cA_{aa})\exp(\cA_{ab})\exp(\cA_{ba})\exp(\cA_{bb})\\
             & \prod_{i=1}^M \exp(\cA_{ai}(1))\exp(\cA_{bi}(1))
              \prod_{i=1}^M \exp(\cA_{ia}(1))\exp(\cA_{ib}(1))|0\rangle|0'\rangle,
\end{split}
\ee
where $\cA_{ij}(1)=\cA'_{ij}+\cA''_{ij}.$

At the first glance, it seems that we go back to the starting point \eqref{eqn:self-gluing expansion}
and get nothing helpful.
But it is not the case.
The key point here is that all the operators $\cA_{ij}(1), \cA_{ia}(1), \cA_{ib}(1), \cA_{ai}(1)$ and $\cA_{bi}(1)$,
when viewed as formal power series of $Q$, are divisible by $Q$.
Repeat the above process inductively, for $N = 1 , 2 , \cdots$, we get a series of operators
$\cA_{ij}(N), \cA_{ia}(N), \cA_{ib}(N), \cA_{ai}(N)$ and $\cA_{bi}(N)$
which are formal power series of $Q$, $\Theta$ and  $\Theta^{-1}$ and divisible
by $Q^N$, such that
\be\label{eqn:self-gluing expansion key}
\begin{split}
\tilde{G}^\bE(V)=&\exp(\sum_{i,j=1}^M (\sum_{k=0}^{N+1}\cA_{ij}(k)))
             \langle 0|\exp(\cB^*_{bb})\exp(\cB^*_{ba})\exp(\cB^*_{ab})\\
             &\exp(\cB^*_{aa})\exp(\cA_{aa})\exp(\cA_{ab})\exp(\cA_{ba})\exp(\cA_{bb})\\
             & \prod_{i=1}^M \exp(\cA_{ai}(N+1))\exp(\cA_{bi}(N+1))\\
              &\prod_{i=1}^M \exp(\cA_{ia}(N+1))\exp(\cA_{ib}(N+1))|0\rangle|0'\rangle,
\end{split}
\ee
for each integer $N>0$, where we set $\cA_{ij}(0) = \cA_{ij}$.

For a formal power series $f = \sum_{n\geq 0}f_n Q^n$ of $Q$ with coefficients in $\cF_1\otimes\cdots\otimes\cF_M$ and a positive integer $N$, we denote by $[f]_N$ the sum $\sum_{n=0}^N f_n Q^n$. Note that by definition
\be
\begin{split}
\tilde{G}^\bE(V)_{closed}=&
             \langle 0|\exp(\cB^*_{bb})\exp(\cB^*_{ba})\exp(\cB^*_{ab})\exp(\cB^*_{aa})\\
             &\exp(\cA_{aa})\exp(\cA_{ab})\exp(\cA_{ba})\exp(\cA_{bb})
             |0\rangle.
\end{split}
\ee
By \eqref{eqn:self-gluing expansion key}, for each positive integer $N$, we have
\be\label{eqn:N-cut indentity}
[\tilde{G}^\bE(V)]_N
=\left[\tilde{G}^\bE(V)_{closed} \exp(\sum_{i,j=1}^M(\sum_{k=0}^{N+1}\cA_{ij}(k)))
|0'\rangle\right]_N.
\ee
For $i , j = 1 , \cdots , M$, we define
\be
\mathcal{R}_{ij} = \sum_{k\geq 0}\cA_{ij}(k).
\ee
Since $\cA_{ij}(k)$ are divisible by $Q^k$, $k>0$, $\mathcal R_{ij}$ are well defined as formal power series of $Q$
and have the form
$$\mathcal R_{ij}=\sum_{m,n\geq 0}R_{mn}^{ij}\psi^i_\bm\psi^{j*}_{-\bn}.$$
By \eqref{eqn:N-cut indentity}, we have
\be
[\tilde{G}^\bE(V)]_N = \left[\tilde{G}^\bE(V)_{closed} \exp(\sum_{i,j=1}^M \mathcal R_{ij})
|0'\rangle\right]_N
\ee
for all positive integer $N$. So we have
\be
\tilde{G}^\bE(V) = \tilde{G}^\bE(V)_{closed} \exp(\sum_{i,j=1}^M\sum_{m,n\geq 0}R^{ij}_{mn}\psi^i_\bm\psi^{j*}_{-\bn})
|0'\rangle,
\ee
and hence
\be
G^\bE(V) = \exp(\sum_{i,j=1}^M\sum_{m,n\geq 0}R^{ij}_{mn}\psi^i_\bm\psi^{j*}_{-\bn})
|0'\rangle
\ee
is a Bogoliubov transform.
\end{proof}

\maketitle

\end{document}